\newcommand{\new}{\color{red}}
\newcounter{zad}
\newcommand{\R}{\mathbb{R}}
\newcommand{\T}{\mathrm{T}}
\newcommand{\lra}{\longrightarrow}
\newcommand{\ra}{\rightarrow}
\newcommand{\eps}{\varepsilon}
\newcommand{\dd}{\operatorname{d}}
\newcommand{\D}{\mathrm{D}}
\newcommand{\pa}{\partial}
\newcommand{\that}{\ |\ }
\def\<#1>{\big\langle #1\big\rangle}
\def\[#1>{\left[ #1\right]}
\newcommand{\du}{\Delta u}
\newcommand{\ddu}[1]{\Delta u^{(#1)}}
\numberwithin{equation}{section} 
\theoremstyle{plain} 
\newtheorem{thm}{Theorem}[section]
\newtheorem{prop}[thm]{Proposition}
\newtheorem{lem}[thm]{Lemma}
\theoremstyle{definition}
\newtheorem{df}[thm]{Definition}
\newtheorem{example}[thm]{Example}
\theoremstyle{remark}
\newtheorem{rem}[thm]{Remark}
\newtheorem{fact}[thm]{Fact}
\newtheorem{hyp}[thm]{Hypothesis}
\newcommand{\End}{\mathrm{End}}
\newcommand{\END}{\bm{\mathrm{End}}}
\newcommand{\image}{\operatorname{Im}}
\newcommand{\g}{\mathfrak{g}}
\newcommand{\spann}{\operatorname{span}_{\R}}
\newcommand{\U}{\Omega} 
\newcommand{\q}[1]{q^{(#1)}}
\newcommand{\dotq}[1]{{\dot{q}}^{(#1)}}
\newcommand{\bb}[1]{b^{(#1)}}
\newcommand{\dotbb}[1]{\dot{b}^{(#1)}}
\newcommand{\lin}[1]{\Phi^{(#1)}} 
\newcommand{\dotlin}[1]{{\dot\Phi}^{(#1)}}
\newcommand{\Z}[1]{Z^{(#1)}} 
\newcommand{\Ad}{\operatorname{Ad}}
\newcommand{\AddCoord}[1]{\Psi^{(#1)}}
\title{Higher derivatives of the end-point map of a control-linear system via adapted coordinates\footnote{\emph{Keywords}: control-linear system, end-point map, variation, jets, Taylor expansion, special coordinates, sub-Riemannian geodesic 

\emph{MSC 2020}: 93C10, 93B11, 	93C73, 53C17}}
\author{Michał Jóźwikowski$^{a,b,}$\footnote{Part of this research was conducted during the employment of MJ at the University of Fribourg, financed by the ERC Starting Grant \emph{Geometry of Metric Groups}, grant agreement 713998 GeoMeG. } \footnote{Corresponding author}\\[1ex]
Bartłomiej Sikorski$^{a,}$\footnote{BS was supported by the Ministry of Science and Higher Education of the Republic of Poland project ,,Szkoła Orłów'', project number 500-D110-06-0465160}\\[2ex]
$^a$ Faculty of Mathematics, Informatics and Mechanics\\[1ex]
University of Warsaw\\[2ex]
$^b$ Department of Mathematics\\[1ex]
University of Fribourg}
\begin{document}
\maketitle
\begin{abstract}
We study the end-point map of a  control-linear system in a neighborhood of an arbitrarily chosen trajectory. In particular, we want to calculate the $k$-th order derivative of this map in a given direction. A priori it is a solution of a quite complicated ODE depending on all derivatives of order less or equal $k$. We prove that there exists a special coordinate system adapted to the geometry of the problem, which changes the system of ODEs describing all derivatives  of the end-point map up to order $k$ to equations of a control-affine (non-autonomous control-linear) system, with the direction of derivation playing the role of the  new control. As an application we study controllability criteria for this  system, obtaining first and second-order necessary optimality conditions of sub-Riemannian geodesics.  In particular, for the case of an abnormal minimizer we can interpret \emph{Goh conditions} as non-controllability conditions of this control-affine system for $k=2$. We make a hypothesis that for higher $k$'s its non-controllability corresponds to  recently obtained higher-order analogs of the Goh conditions [Boarotto, Monti,  Palmurella, 2020], [Boarotto, Monti, a Socionovo, 2022].  
\end{abstract}


\section{Introduction}\label{sec:intro}
\paragraph{The problem}

Local properties of the end-point map around a trajectory can be quite significant in some aspects of control theory. They play an important role in phenomena such as local controllability, abnormality, and optimality of trajectories. Let us name a few examples. For optimal control problems openness of the  (extended, i.e. including the costs) end-point map around a trajectory excludes the possibility that this trajectory is optimal. Therefore non-openness of the end-point map gives necessary conditions for optimality. By using the standard version of the Open Mapping Theorem (\emph{OMT}) we get first-order conditions for optimality, i.e equations of the Pontryagin Maximum Principle. These conditions  are also fundamental for classifying extremals of the optimal control problem as \emph{normal} or \emph{abnormal} \cite{Jurdjevic_1997, Agrachev_Sachkov_2004}. In some specific situations, like the study of abnormal sub-Riemannian geodesics, the above conditions are, however, insufficient. Therefore one needs to use more specific versions of the OMT. A degree-two version is a basis of \cite{Agrachev_Sarychev_1996, MJ_SR_deg2}, and recently there has been an attempt of using Sussmann's version of the OMT \cite{Sussmann_OMT} to study sub-Riemannian optimality conditions of any order \cite{Monti_third_order_2020,Boarotto_Monti_Socionovo_2022}.

For a control system on a manifold $M$, whose set of admissible controls $\U$ forms a vector space, perhaps the most natural idea to study the time $t$-end-point map $\End^t:\U\ra M$ is by calculating its Taylor expansion. In this paper  we consider such an expansion for a  control-linear system
\begin{equation}
\tag{$\Lambda$}
    \dot{q}(t)=\sum_i^l u_i(t) X_i(q(t))\ ,
\end{equation}
where $u=(u_1,\hdots,u_l)\in \U \subset \operatorname{Meas}([0,T],\R^l)$ is the control, and  $X_i$, for $i=1,2,\hdots,l$ are linearly independent vector fields on $M$. 
Our goal is to calculate the $k$-th order derivative  of the end-point map at a given control $u\in \U$ in the direction of a control $\du\in \T_u\U\simeq \U$, i.e $\D^k_u\End^t[\du]=\frac{\dd^k}{\dd s^k}\big|_{s=0}\End^t[u+s\cdot\du]$.  At this point, it is worth to remark, that as $\End^{t}$ is a manifold-valued map, such a derivative usually has no geometric sense, although it can be calculated in any chosen local coordinate system. This issue can be easily resolved by using the geometric notion of a $k$-jet, rather than a coordinate-dependent notion of a $k$-th derivative. We comment on this matter  in Remark~\ref{rem:jets}. 

\paragraph{Our results} 

First of all, in Lemma~\ref{lem:b_k}, we were able to derive a hierarchy of ODEs, linear in both $u$ and $\du$, describing the desired derivatives  in a local coordinate system. Our main result is, however, an observation that there exists a time-dependent family of diffeomorphisms (depending on the given control $u$ and the vector fields $X_i$) which transforms the differential equations describing the set of derivatives  $$ (\D_u\End^t[\du],\D^2_u\End^t[\du],\hdots,\D^k_u\End^t[\du])\ ,$$ 
 i.e. the $k$-jet of the curve $s\mapsto \End^t(u+s\cdot\du)$, into a system of ODEs for new variables $(\q{1},\q{2},\hdots,\q{k})$, which is linear in $\du$, and has no explicit dependence of the initial control $u$. 
These transformations are described by a family of multi-linear maps -- their construction and properties are stated as Theorem~\ref{thm:adapted}. For reasons that will be clarified in the next paragraph, we call these newly constructed variables -- \emph{adapted coordinates}. As a corollary, we formulate Theorem~\ref{thm:adapted_q} which states that  $(\q{1}(t,\du),\q{2}(t,\du),\hdots,\q{k}(t,\du))$ -- the solutions of the considered ODE system in adapted coordinates -- are trajectories of a non-autonomous control-linear system (or, as we prefer to see it, a control-affine system in variables $(t,\q{1},\hdots,\q{k})$ with the drift being simply $\pa_t$) with $\du$ playing the role of a linear control. Moreover, this system of ODEs is naturally graded. The above results, i.e. Lemma~\ref{lem:b_k}, Theorem~\ref{thm:adapted}, and Theorem~\ref{thm:adapted_q}, are further generalized as, respectively, Lemma~\ref{lem:b_k_polynomial}, Theorem~\ref{thm:adapted_polynomial} and Theorem~\ref{thm:adapted_q_polynomial} to describe $k$-jets of curves $s\mapsto\End^t(u+s\cdot\ddu{1}+s^2\cdot\ddu{2}+\hdots+s^k\cdot\ddu{k})$, which seem to be more promising from the point of view of applications.  \medskip

The idea behind adapted coordinates is actually quite easy to explain. Note that after fixing a control $u$, system \eqref{eqn:lin_ctrl_syst} evolves according to a time-dependent vector field $X_u(t,q)=\sum_{i=1}^l u_i(t)X_i(q)$. Under mild assumption this evolution produces a time-dependent flow $X^{tt'}_u$ acting on $M$  (being a groupoid rather than a group). Now the tangent map of this flow acts on $\T M$ and, in particular, it gives a distinguished evolution in the tangent spaces $\T_{q(t)} M$ along a given trajectory of the system $q(t)$. Thus after fixing a basis $\{e_1(0),\hdots,e_n(0)\}$ of $\T_{q(0)}M$, the flow spreads it into a family of bases $\{e_1(t),\hdots,e_n(t)\}$, each for every particular tangent space $\T_{q(t)}M$. Adapted coordinates $\q{1}(t,\du)$ are precisely the coordinates of the tangent vector $\D_u\End^t[\du]$ in this new basis. The reason why the evolution equation for $t\mapsto \D_u\End^t[\du]$ looks simpler in these new coordinates is because the natural evolution by the flow of the control vector field $X_u(t,q)$ was included in their construction. In other words, the new coordinates are \emph{adapted} to the chosen evolution of the system. 
\smallskip

Higher-order adapted coordinates $(\q{1},\hdots,\q{k})$ are constructed analogously, by considering the action of the flow of the control vector field on $k$-jets. Thus one should think that $(\q{1},\hdots,\q{k})$ are coordinates in  $\T^k_{q(t)}M$ -- the space of $k$-jets of curves passing through the base point $q(t)$ -- naturally ,,preferred'' by the evolution on $M$ given by the field $X_u(t,q)$. We clarify this intuition in Lemma \ref{lem:adapted_geometric}.  The presence of a natural graded structure on $\T^kM$ \cite{Saunders_1989,Grabowski_Rotkiewicz_2012} is the reason why we observe the presence of the natural grading in the ODE systems in Theorems~\ref{thm:adapted} and~\ref{thm:adapted_q}. The theory is illustrated with a study of a particular system of a generalized Martinet system, running throughout the text (Examples~\ref{ex:1}, \ref{ex:2} and~\ref{ex:3}).
\medskip

We propose two particular applications of the presented theory. First, in Subsection~\ref{ssec:groups}, we calculate the adapted coordinates for an invariant control system on a matrix group. In this case the construction turns out to be quite simple. Moreover, it allows for an easy derivation of some formulas of Le Donne \cite{ELD_new}. We also comment on the case of a general Lie group. 

Later in Subsection~\ref{ssec:sr}, we study second-order optimality conditions for a sub-Riemannian geodesic problem. Our idea is to interpret the assertion of Agrachev-Sarychev Index Lemma \cite{Agrachev_Sarychev_1996} as  non-controllability conditions for a certain control system involving first and second derivatives of the end-point map. However, by Theorem~\ref{thm:adapted_q}, in adapted coordinates $(t,\q{1},\q{2})$ this system is a control-affine system, and we may address the question of its controllability using the results of Sussmann and Jurdjevic \cite{Sussmann_Jurdjevic_1972}. It turns out that the criteria for non-controllability give precisely the \emph{Goh conditions} \cite{Goh_1966} (see Lemmas~\ref{lem:results_deg_2} and \ref{lem:parts}). It seems to us that such an interpretation of Goh conditions was not present in the literature so far, not to mention that (once the Agrachev-Sarychev Index Lemma is known) the proof does not require making any estimates. This observation is a basis of a Hypothesis~\ref{hyp} (backup-ed by some calculations) that higher-order Goh conditions introduced recently in \cite{Monti_third_order_2020,Boarotto_Monti_Socionovo_2022} are a consequence of non-controllability of a control-affine system described by Theorem~\ref{thm:adapted_q_polynomial}.
We also refer to a publication of one of us \cite{MJ_SR_deg2} for further applications of adapted coordinates in sub-Riemannian geometry. Here these were used to study the geometry of second-order approximation of the end-point map around a minimizing trajectory.

\paragraph{A remark about jets}
\begin{rem}\label{rem:jets}
In general, there is a sense to speak about higher derivatives only for maps valued in a vector space. Indeed, consider the end-point map $\End^t:\U\lra M$ at $u\in \Omega$. Let $\phi: M\supset U\lra \R^n$ be local coordinates on $M$  around $p=\End^t(u)$. We may then calculate the Taylor expansion of $\phi\circ \End^t:\U\lra \R^n$, but then, say, the $k$-th order term will not transform well when passing to a new coordinate system. In other words, $\D^k_u\End^t[\du]$ is not a well-defined geometric object for $k\geq 2$. 

There are essentially two ways to deal with this problem. The first one is to restrict our attention to a situation in which a higher derivative makes sense. This strategy has been used  in some Agrachev's works -- see for example \cite{Agrachev_Sachkov_2004}, where the second derivative $\D^2_u\End^t$ is defined on $\ker \D_u\End^t$ and takes values in $\operatorname{coker} \D_u\End^t=\T_{\End^t(u)}M/ \image \D_u\End^t$. In a moment we shall explain why it is so. Similar constructions are present, for instance, in \cite{Monti_third_order_2020,Boarotto_Monti_Socionovo_2022}.

In this paper we prefer a different approach. Instead of being interested in each particular term of the Taylor expansion, we want to consider the whole series up to a term of a given order $k$. This can be formalized in the language of jets as follows (cf. \cite{Saunders_1989}). We say that two curves $\gamma,\gamma':(-\eps,\eps)\ra M$ passing through $p=\gamma(0)=\gamma'(0)\in M$ are \emph{tangent up to order $k$ at $p$}, if in some (and thus any) coordinate system $\phi:U\ra \R^n$ around $p$ we have for all $m=1,2\hdots,k$:
$$\frac{\dd^m}{\dd s^m}\Big|_{s=0}\phi (\gamma(s))=\frac{\dd^m}{\dd s^m}\Big|_{s=0}\phi (\gamma'(s))\ . $$
The relation of the $k$-th order tangency is an equivalence relation and its equivalence classes are called \emph{$k$-jets}. We will denote the space of all $k$-jets on $M$ by $\T^kM$. The assignment of a $k$-jet to its base point $[\gamma]_{\sim_k}\mapsto p=\gamma(0)$ makes $\T^kM\ra M$ a locally trivial bundle. In fact, this is an example of a \emph{graded bundle} in the sense of Grabowski and Rotkiewicz \cite{Grabowski_Rotkiewicz_2012}, meaning that the fibers $\T^k_pM$ are endowed with a canonical action of the multiplicative reals. Actually, the structure of $\T^kM$ is much more specific, namely natural projections $[\gamma]_{\sim i}\mapsto [\gamma]_{\sim (i-1)}$ give rise to the tower of fibrations $\T^kM\ra \T^{k-1}M\ra \hdots\T^2 M\ra \T M\ra M$, where the first level is a vector bundle and each higher level is an affine bundle modeled on $\T M$ (see \cite{Saunders_1989}).  

More specifically, in this paper we will consider the Taylor expansion of the end-point map, which after identifying a manifold $M$ with $\R^n$ by a choice of local coordinates may be written as follows:
\begin{align*}\End^t[u+s\du]\overset{loc}=\End^t[u]+s\cdot \D_{u}\End^t[\du]+s^2\cdot\frac 1{2!} \D^2_{u}\End^t[\du]+\hdots+s^k \cdot\frac 1{k!} \D^k\End^t_{u}[\du]+o(s^k)\ .
\end{align*}
As we mentioned earlier, individual terms $\D^i_u\End^t[\du]$ have no geometric meaning for $i\geq 2$, yet their whole collection $(\D_u\End^t[\du],\D^2_u\End^t[\du],\hdots,\D^k_u\End^t[\du])$, encoding the $k$-jet of a curve $s\mapsto \End^t[u+s
\du]$, does. This can be to some extent seen at the level of ODEs \eqref{eqn:bb_k} which describe the time evolution of this $k$-jet: equation for $\D^k_u\End^t[\du]$ contains terms depending on lower-order derivatives, hence there is no way to separate $\D^k_u\End[\du]$ from the rest of the collection $(\D_u\End^t[\du],\D^2_u\End^t[\du],\hdots,\D^k_u\End^t[\du])$. 
\end{rem}

Finally, note that the concept of a jet allows to explain the understanding of second (and higher -- see \cite{Monti_third_order_2020}) derivatives in the spirit of Argachev. Namely, take a curve $u_s=u+s\cdot u^{(1)}+s^2\cdot u^{(2)}+o(s^2)$ in $\Omega$, and let us calculate the second Taylor expansion of $\End(u_s)$ in some local coordinate system:
\begin{equation}
    \label{eqn:expansion_End_2_degree}
\End(u+s\cdot u^{(1)}+s^2\cdot u^{(2)})\overset{loc}=\End(u)+s\cdot\D_u\End\left[u^{(1)}\right]+s^2\cdot\left(\D_u\End\left[u^{(2)}\right]+\frac 12\D^2_u\End\left[u^{(1)},u^{(1)}\right]\right)+o(s^2)\ .
\end{equation}
Thus to extract the $\D^2_u\End\left[u^{(1)},u^{(1)}\right]$-term from the whole 2-jet $(\D_u\End\left[u^{(1)}\right],\D_u\End\left[u^{(2)}\right]+\D^2_u\End\left[u^{(1)},u^{(1)}\right])$ one needs first to assume that $\D_u\End\left[u^{(1)}\right]=0$ and then quotient out the $\D_u\End\left[u^{(2)}\right]$-term, precisely as in the Agrachev's approach  (where the second derivative is defined for $u^{(1)}\in\ker \D_u\End$, and takes values in $\operatorname{coker} \D_u\End$).

\paragraph{Notation: graded multi-indexes and the polynomial expansion of a composition of maps}

\begin{rem}[Notation convention]\label{multi_ind_not}
Consider a multi-index $\alpha=(a_1,a_2,\hdots,a_k)$ and let us introduce the following notation:
\begin{align*}
    &\text{$|\alpha|:=\sum_{i=1}^k a_i$\ is the \emph{absolute value} of $\alpha$,}\\
    &\text{$w(\alpha):=\sum_{i=1}^k i\cdot a_k$\ is the \emph{weight} of $\alpha$}\\
    &\alpha!:=a_1!\cdot a_2!\cdot\hdots\cdot a_k!\ .
\end{align*}
It is convenient to think that $\alpha$ is graded, with $a_i$ being of weight $i$.  Moreover, for an $|\alpha|$-linear map $\Phi^{|\alpha|}$ and $\vec{b}=(\bb{1},\bb{2},\hdots,\bb{k})$ we define 
$$\Phi^{|\alpha|}\left[\vec{b}^{ \ \alpha}\right]:=\Phi^{|\alpha|}\Big[\underbrace{\bb{1},\hdots,\bb{1}}_{a_1},\underbrace{\bb{2},\hdots,\bb{2}}_{a_2},\hdots,\underbrace{\bb{k},\hdots,\bb{k}}_{a_k}\Big]\ .$$
Throughout this paper we understand gradings and weights according to \cite{Grabowski_Rotkiewicz_2012}.
\end{rem}
The above convention allows for an elegant description of the polynomial expansion of a composition of a function and a curve.
\begin{lem}[Faa di Bruno]\label{lem:composition}
Consider a smooth curve $\gamma:(-\eps,\eps)\lra \R^n$, whose polynomial expansion at $s=0$ reads as
$$\gamma(s)=\gamma(0)+s\cdot \gamma^{(1)}+s^2\cdot\gamma^{(2)}+\hdots+s^k\cdot \gamma^{(k)}+o(s^k)\ ,$$
and let $f:\R^n\lra\R$ be a smooth function. Then their composition $F:=f(\gamma):(-\eps,\eps)\lra \R$ expands as
$$F(s)=F(0)+s\cdot F^{(1)}+s^2\cdot F^{(2)}+\hdots s^k\cdot F^{(k)}+o(s^k)\ ,$$
where 
\begin{equation}
\label{eqn:composition}
    F^{(m)}=\sum_{\alpha,\ w(\alpha)=m}\frac 1{\alpha!}\cdot \D^{|\alpha|}_{\gamma(0)}f\left[\vec{\gamma}^{\ \alpha}\right]\ 
\end{equation}
with $\vec{\gamma}=(\gamma^{(1)},\gamma^{(2)},\hdots,\gamma^{(k)})$.
\end{lem}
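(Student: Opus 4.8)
The plan is to substitute the Taylor polynomial of $\gamma$ into the Taylor polynomial of $F$ at the point $\gamma(0)$ and then regroup the result by powers of $s$; the coefficient of $s^m$ will turn out to be exactly the right-hand side of \eqref{eqn:composition}. I would write $h(s):=\gamma(s)-\gamma(0)$, so that $h(s)=\sum_{i=1}^k s^i\,\gamma^{(i)}+o(s^k)$ and in particular $h(s)=O(s)$ as $s\to 0$. Taylor's theorem applied to the smooth function $F$ at $\gamma(0)$ then gives
\begin{equation*}
\Psi(s)=F(\gamma(0))+\sum_{n=1}^k \frac1{n!}\,\D^n_{\gamma(0)}F\big[\underbrace{h(s),\hdots,h(s)}_{n}\big]+o(|h(s)|^k),
\end{equation*}
and since $|h(s)|^k=O(s^k)$ the final remainder is $o(s^k)$.

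Next I would argue that, modulo $o(s^k)$, one may replace $h(s)$ by its polynomial part $p(s):=\sum_{i=1}^k s^i\,\gamma^{(i)}$ inside each multilinear term. Writing $h=p+r$ with $r=o(s^k)$ and expanding $\D^n_{\gamma(0)}F[h,\hdots,h]$ by multilinearity, every summand containing at least one factor $r$ is a product of $r=o(s^k)$ with $n-1\ge 0$ factors that are $O(s)$, hence is itself $o(s^k)$. It therefore suffices to expand $\D^n_{\gamma(0)}F[p(s),\hdots,p(s)]$, and by multilinearity
\begin{equation*}
\D^n_{\gamma(0)}F\big[p(s),\hdots,p(s)\big]=\sum_{i_1,\hdots,i_n=1}^k s^{\,i_1+\hdots+i_n}\,\D^n_{\gamma(0)}F\big[\gamma^{(i_1)},\hdots,\gamma^{(i_n)}\big],
\end{equation*}
so that collecting the contributions of total degree $m\le k$ amounts to restricting to tuples with $i_1+\hdots+i_n=m$.

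The combinatorial heart of the argument is the regrouping of these tuples. For a fixed tuple $(i_1,\hdots,i_n)$ I would let $a_j$ be the number of indices equal to $j$, producing a multi-index $\alpha=(a_1,\hdots,a_k)$ with $|\alpha|=n$ and $w(\alpha)=\sum_j j\,a_j=m$. Because $\D^n_{\gamma(0)}F$ is a symmetric multilinear map, its value $\D^n_{\gamma(0)}F[\gamma^{(i_1)},\hdots,\gamma^{(i_n)}]$ depends only on the multiset of indices, i.e.\ only on $\alpha$, and equals $\D^{|\alpha|}_{\gamma(0)}F[\vec{\gamma}^{\ \alpha}]$ in the notation of Remark~\ref{multi_ind_not}; the number of ordered tuples producing a prescribed $\alpha$ is the multinomial coefficient $\tfrac{n!}{\alpha!}=\tfrac{|\alpha|!}{\alpha!}$. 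Hence the coefficient of $s^m$ in $\Psi(s)$ equals
\begin{equation*}
\sum_{n=1}^k \frac1{n!}\sum_{\substack{|\alpha|=n\\ w(\alpha)=m}}\frac{|\alpha|!}{\alpha!}\,\D^{|\alpha|}_{\gamma(0)}F\big[\vec{\gamma}^{\ \alpha}\big]=\sum_{w(\alpha)=m}\frac1{\alpha!}\,\D^{|\alpha|}_{\gamma(0)}F\big[\vec{\gamma}^{\ \alpha}\big],
\end{equation*}
which is precisely $\Psi^{(m)}$.

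The only point requiring genuine care is the error bookkeeping: I would verify explicitly that both the Taylor remainder of $F$ and the truncation $r$ of $\gamma$ contribute only $o(s^k)$ once inserted into the multilinear maps, since this is what legitimises replacing $h$ by the polynomial $p$ and truncating the double sum at total degree $k$. The counting of ordered tuples by their multiplicity profile, by contrast, is the standard multinomial identity underlying Faà di Bruno's formula, and I expect it to be the conceptually clean step rather than the obstacle.
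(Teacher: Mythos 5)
Your proof is correct: the paper explicitly leaves this lemma as an exercise, and your argument (Taylor-expanding $F$ at $\gamma(0)$, substituting the polynomial part of $\gamma$, and regrouping ordered tuples $(i_1,\hdots,i_n)$ with $i_1+\hdots+i_n=m$ by their multiplicity profile $\alpha$, so that the multinomial count $|\alpha|!/\alpha!$ cancels the $1/n!$ to leave $1/\alpha!$) is exactly the standard Fa\`a di Bruno computation the authors intend, and it reproduces the low-degree formulas \eqref{eqn:q_1}--\eqref{eqn:q_3} used later. Your error bookkeeping (the remainder $o(|h(s)|^k)=o(s^k)$ since $h(s)=O(s)$, and each cross term containing the tail $r=o(s^k)$ being negligible by multilinearity) is the one point that genuinely needed care, and you handle it correctly.
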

We leave the proof as an exercise.

\section{Higher derivatives of the end-point map}\label{sec:derivatives}

\paragraph{A control-linear system} Consider a control-linear system \eqref{eqn:lin_ctrl_syst} on an $n$-dimensional manifold $M$ given by a rank-$l$ distribution  $\mathcal{D}_p=\spann\{X_1(p),\hdots, X_l(p)\}\subset \T_p M$, for all $p\in M$:
\begin{equation}\label{eqn:lin_ctrl_syst} \tag{$\Lambda$}
\dot q(t)=\sum_{i=1}^l u_i(t)X_i(q(t));\quad\text{with the initial condition $q(0)=q_0$,}
\end{equation}
where $u(t)=(u_1(t),\hdots,u_l(t))\in \R^l$ is the control. We assume that $u$ belongs to some space $\U$. Unless specified differently,  we will assume only that $\U$ is a vector subspace of the space of measurable maps $\operatorname{Meas}([0,T],\R^l)$.

\paragraph{The end-point map}
For each $t\in[0,T]$ denote by
$$\End^t:\U\lra M;\qquad \End^t:u\longmapsto q(t)\text{ satisfying \eqref{eqn:lin_ctrl_syst}}\ $$
 the family of the \emph{end-point maps} related with \eqref{eqn:lin_ctrl_syst}. Our goal is to calculate $\D^k_u \End^t[\du]$ -- the $k$-th derivative of $\End^t$ at a given control $u$ in the direction of  $\du\in\T_u\U\simeq \U$, for each $k=1,2,3,\hdots$.
\medskip

To do this let us consider a family of solutions $q_s(t)$ of \eqref{eqn:lin_ctrl_syst} corresponding to the controls $u+s\cdot\du$, i.e.
\begin{equation}
    \label{eqn:q_s_t}
\dot q_s(t)=\sum_i(u_i(t)+s\cdot\du_i(t))X_i(q_s(t));\quad\text{with $q_s(0)= q_0$.}
\end{equation}
The above is an ODE in the sense of Caratheodory  smoothly depending on the parameter $s$.  As such it has a solution smoothly depending on $s$ -- see \cite{Bressan_Piccoli_2004}.  The Taylor expansion of $s\mapsto q_s(t)$ at $s=0$ (is some local coordinate system) gives us
\begin{align*}q_s(t)=&\End^t[u+s\du]\overset{loc}=\\
&\End^t[u]+s\cdot \D_{u}\End^t[\du]+s^2\cdot\frac 1{2!} \D^2_{u}\End^t[\du]+\hdots+s^k \cdot\frac 1{k!} \D^k_{u}\End^t[\du]+o(s^k)=\\
&q(t)+s\cdot \bb{1}(t,\du)+s^2\cdot\bb{2}(t,\du)+\hdots s^k\cdot\bb{k}(t,\du)+o(s^k)\ ,\\
&\text{where}\qquad \bb{m}(t,\du):=\frac 1{m!}\frac{\pa^m}{\pa s^m}\Big|_{s=0} q_s(t)=\frac 1{m!}\D^m_u \End^t[\du].\end{align*}  Actually, in light of Remark~\ref{rem:jets}, there is a sense to speak about higher derivatives $\D^k_u\End^{t}[\du]$ only if we locally identify $M$ with $\R^n$ by means of a particular choice of local coordinates.  Therefore, while reading the remaining part of this article the Reader should remember that the ODEs characterizing curves $\bb{m}(t,\du)$ are derived in a particular coordinate frame.  However, in spite of that, the whole $k$-jet of the curve $s\mapsto \End^t[u+s\cdot\du]$ at $s=0$, i.e.
$$q(t)+s\cdot \bb{1}(t,\du)+s^2\cdot\bb{2}(t,\du)+\hdots s^k\cdot\bb{k}(t,\du)
+o(s^k)$$  is a well-defined geometric notion. We shall refer to it as a \emph{$k$-variation} of $\End^t$ at $u$ in the direction of $\du$.
\smallskip

As $\dotbb{m}(t,\du)=\frac 1{m!}\pa^m_s\big|_{s=0}\dot{q}_s(t)$, for each $m=1,2,\hdots,k$ curves $\bb{m}(t,\du)$ are solutions of a system of ODEs obtained by an $m$-fold differentiation of  \eqref{eqn:q_s_t} at $s=0$. The first few of these equations are easy to calculate: 
\begin{align*}
\dotbb{1}(t,\du)=&\sum_i u_i\D_{q(t)} X_i [\bb{1}]+\sum_i \du_i X_i\Big|_{q(t)}\\
\dotbb{2}(t,\du)=&\sum_i u_i\left(\D_{q(t)} X_i[\bb{2}]+\frac 1{2!} \D^2_{q(t)} X_i[\bb{1},\bb{1}]\right)+\sum_i \du_i \D_{q(t)}X_i[ \bb{1}]\\
\dotbb{3}(t,\du)=&\sum_i u_i\left(\D_{q(t)}X_i[\bb{3}]+ \D^2_{q(t)}X_i[ \bb{1},\bb{2}]+\frac{1}{3!}\D^3_{q(t)}X_i [\bb{1},\bb{1},\bb{1}]\right)+\\
&\sum_i \du_i \left(\D_{q(t)}X_i[\bb{2}]+\frac 1{2!}\D^2_{q(t)}X_i[ \bb{1},\bb{1}]\right)\ .
\end{align*}
Here $q(t)=q_0(t)$ is a trajectory of \eqref{eqn:lin_ctrl_syst} corresponding to the control $u$. A clear inductive pattern begins to be seen. Moreover, the above equations are naturally graded after putting $\deg (\bb{m})=m$ and $\deg(\du_i)=1$. In general, we have the following result

\begin{lem}[the general form of $\bb{m}(t,\du)$s]
\label{lem:b_k}
 Consider the $k$-jet of the curve $s\mapsto \End^t[u+s\cdot\du]$ at $s=0$  given in local coordinates by 
$$\End^t(u+s\cdot\du)\overset{loc}=q(t)+s\cdot \bb{1}(t,\du)+s^2\cdot\bb{2}(t,\du)+\hdots s^k\cdot\bb{k}(t,\du)
+o(s^k)\ .$$
Then curves $\bb{m}(t,\du)$
 are subject to the following  system of ODEs:
\begin{equation}\label{eqn:bb_k} 
\begin{split}\dotbb{m}(t,\du) =& \sum_{i} u_i\left(\sum_{\alpha,\  w(\alpha)=m} \frac 1{\alpha!} \cdot\D^{|\alpha|}_{q(t)}X_i[\vec{b}(t,\du)^\alpha]  \right)
+\\
&\sum_{i} \du_i\left(\sum_{\beta,\  w(\beta)=m-1} \frac 1{\beta !} \cdot\D^{|\beta|}_{q(t)}X_i[\vec{b}(t,\du)^\beta]  \right)
\end{split}
\end{equation}
where we use the notation introduced on page \pageref{multi_ind_not}, i.e. the summation is taken over all multi-indices $\alpha=(a_1,a_2,\hdots)$ of a given weight $w(\alpha):=\sum_i i\cdot a_i$ equal $m$ and $m-1$, respectively, $\D^{|\alpha|}_{q(t)}X_i$ denotes the $|\alpha|$-th derivative of $X_i$ at $q(t)$ understood as a $|\alpha|$-linear map, and $\vec{b}(t,\du)=(\bb{1}(t,\du),\bb{2}(t,\du),\hdots,\bb{k}(t,\du))$.

\end{lem}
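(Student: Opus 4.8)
The plan is to prove the formula \eqref{eqn:bb_k} by induction on $k$, using the Taylor expansion machinery of Lemma~\ref{lem:composition} applied to each vector field $X_i$. The key observation is that $\bb{k}(t)=\frac{1}{k!}\pa^k_s\big|_{s=0}q_s(t)$, so differentiating the defining ODE \eqref{eqn:q_s_t} and extracting the coefficient of $s^k$ will produce the claimed expression. First I would fix the trajectory $q(t)=q_0(t)$ and, in a local coordinate system identifying $M$ with $\R^n$, write the right-hand side of \eqref{eqn:q_s_t} as $\dot q_s(t)=\sum_i\big(u_i(t)+s\cdot\du_i(t)\big)X_i(q_s(t))$. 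The whole task reduces to expanding the curve $s\mapsto X_i(q_s(t))$ into its Taylor series in $s$ and collecting the coefficient at $s^k$, after multiplying by the scalar factor $u_i(t)+s\cdot\du_i(t)$.

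The main computational step is the expansion of $s\mapsto X_i(q_s(t))$. Since $q_s(t)=q(t)+s\cdot\bb{1}(t)+s^2\cdot\bb{2}(t)+\hdots+s^k\cdot\bb{k}(t)+o(s^k)$, applying Lemma~\ref{lem:composition} componentwise to each scalar component $F=X_i^j$ of $X_i$ (with $\gamma(s)=q_s(t)$ and $\gamma^{(m)}=\bb{m}(t)$) gives directly
\begin{equation*}
X_i(q_s(t))=\sum_{m=0}^{k}s^m\left(\sum_{\alpha,\ w(\alpha)=m}\frac{1}{\alpha!}\cdot\D^{|\alpha|}_{q(t)}X_i\big[\vec{b}(t)^\alpha\big]\right)+o(s^k)\ ,
\end{equation*}
where $\vec{b}(t)=(\bb{1}(t),\hdots,\bb{k}(t))$ and the $m=0$ term is simply $X_i(q(t))$. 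This is the heart of the argument: the combinatorial weight-grading in Lemma~\ref{lem:composition} matches exactly the grading $\deg(\bb{m})=m$ that one reads off from the low-order equations displayed above.

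Next I would multiply by $u_i(t)+s\cdot\du_i(t)$ and sum over $i$, then extract the coefficient of $s^k$ from the resulting series for $\dot q_s(t)$. The $u_i$-term contributes the coefficient of $s^k$ in the expansion of $X_i(q_s(t))$, namely the sum over $w(\alpha)=k$; the $s\cdot\du_i$-term shifts the index by one, contributing the coefficient of $s^{k-1}$, namely the sum over $w(\beta)=k-1$. Since $\dot q_s(t)=\dot q(t)+\sum_{m\geq 1}s^m\,\dotbb{m}(t)+o(s^k)$ and $\dotbb{k}(t)$ is precisely the coefficient of $s^k$ on the left-hand side, matching coefficients yields \eqref{eqn:bb_k} immediately. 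The factor $\frac{1}{k!}$ built into the definition of $\bb{k}$ is exactly what is needed so that the formula comes out with the clean multinomial coefficients $\frac{1}{\alpha!}$ rather than any extra factorial corrections.

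The step I expect to require the most care is the justification of term-by-term differentiation in $s$ under the Caratheodory regularity of the ODE: one must know that $s\mapsto q_s(t)$ is genuinely $C^k$ in $s$ so that the formal Taylor expansion is legitimate and the coefficient-matching is valid. This is precisely the smooth dependence on parameters guaranteed by \cite{Bressan_Piccoli_2004} and already invoked in the text, so the analytic content is available off the shelf; the remaining work is purely the bookkeeping of the multi-index expansion, which Lemma~\ref{lem:composition} packages cleanly. The low-order cases $k=1,2,3$ displayed before the statement can serve as the base and as a sanity check that the weight conventions are applied correctly.
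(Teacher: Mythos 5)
Your proposal is correct and is essentially the paper's own argument: both hinge on applying Lemma~\ref{lem:composition} to the curve $s\mapsto X_i(q_s(t))$ and on the index shift by one coming from the factor $s$ multiplying $\du_i$, the only cosmetic difference being that the paper phrases this as a $k$-fold differentiation at $s=0$ via the Leibniz rule while you extract the coefficient of $s^k$ from the Taylor series. The analytic justification you flag (smooth dependence on $s$ for the Caratheodory ODE) is handled in the paper exactly as you propose, by citing the standard parameter-regularity result.
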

\begin{proof}
The $m$-fold differentiation of \eqref{eqn:q_s_t} gives us
\begin{align*}
    \frac{\dd}{\dd t}\left(\pa_s^mq_s(t)\right)=&\pa_s^m\left(\dot{q_s}(t)\right)=\pa_s^m\left[\sum_i \left(u_i(t)+s\cdot \du_i(t)\right)X_i(q_s(t))\right]=\\
    &\sum_i u_i(t)\pa_s^m\left[X_i(q_s(t))\right]+\sum_i \du_i(t)\pa_s^m\left[s\cdot X_i(q_s(t))\right]=\\
    &\sum_i u_i(t)\pa_s^m\left[X_i(q_s(t))\right]+\sum_i \du_i(t)\left\{m\cdot\pa_s^{m-1}\left[X_i(q_s(t))\right]+s\cdot\pa_s^{m}\left[X_i(q_s(t))\right]\right\}
\end{align*}
Now considering the above equality at $s=0$, since $\pa_s^m\big|_0q_s(t)=m!\cdot \bb{m}(t)$ we get
\begin{align*}
    m!\cdot \dotbb{m}(t)=\sum_i u_i(t)\pa_s^m\Big|_0\left[X_i(q_s(t))\right]+m\sum_i \du_i(t)\pa_s^{m-1}\Big|_0\left[X_i(q_s(t))\right]
\end{align*}
Finally $q_s(t)$ expands as
$q_s(t)=q(t)+s\cdot \bb{1}(t)+s^2\cdot\bb{2}(t)+\hdots s^k\cdot\bb{k}(t)+o(s^k)$
so, by the results of Lemma~\ref{lem:composition},
\begin{align*}
    &\pa_s^m\Big|_0\left[X_i(q_s(t))\right]=m! \sum_{\alpha,\  w(\alpha)=m} \frac{1}{\alpha!} \cdot\D^{|\alpha|}_{q(t)}X_i[\vec{b}(t)^\alpha]\qquad\text{and}\\ &\pa_s^{m-1}\Big|_0\left[X_i(q_s(t))\right]=(m-1)! \sum_{\beta,\  w(\beta)=m-1} \frac{1}{\beta!} \cdot\D^{|\beta|}_{q(t)}X_i[\vec{b}(t)^\beta]
\end{align*}
Thus \eqref{eqn:bb_k} holds.
\end{proof}
 Let us comment briefly on the question of the existence of solutions of the system \eqref{eqn:bb_k}. On the one hand, we may argue that by a general result of existence, uniqueness, and parameter-regularity of ODEs in the sense of Caratheodory -- see for example \cite{Bressan_Piccoli_2004} -- curves $\bb{m}(t,\du)$ should be well defined for all $t$ for which $q(t)$ satisfying \eqref{eqn:lin_ctrl_syst} makes sense. We may, however, see this fact from a more technical perspective. Note namely, that if the $(m-1)$-jet $(\bb{1}(t,\du),\hdots,\bb{m-1}(t,\du))$ is known, then by \eqref{eqn:bb_k}, the equation for $\bb{m}(t,\du)$ takes the form of an affine ODE
$$\dotbb{m}=\sum_{i}u_i(t)\D_{q(t)}X_i[\bb{m}]+f(t)\ ,$$
where $f(t)$ is measurable. Thus, whenever one finds a fundamental solution of the homogeneous part of this equation,  
$$\dot b=\sum_{i}u_i(t)\D_{q(t)}X_i[b]\ ,$$
$\bb{m}(t,\du)$ can be derived by the standard Cauchy formula. Yet the fundamental solution of the homogeneous part is simply the tangent map of the time-dependent flow of the control vector field $X_u(t,q)=\sum_{i}u_i(t) X_i(q)$. Thus it is indeed well-defined whenever the flow of $X_u(t,q)$ is well-defined. For a more detailed discussion of time-dependent flows and their tangent maps see \cite[Sec. 2]{MJ_WR_contact_pmp}.

\begin{example}[Generalized Martinet system, part 1.]\label{ex:1}
After \cite{Monti_third_order_2020}, consider a control system in $\R^3\ni(x_1,x_2,x_3)$ given by a pair of vector fields
$$X_1=\frac{\pa}{\pa_{x_1}}\quad\text{and}\quad X_2=(1-x_1)\cdot\frac{\pa}{\pa x_2}+x_1^p\cdot\frac{\pa}{\pa x_3}\ ,$$
where $p>1$ is an integer.
Let $u(t)\equiv(0,1)$ be the control corresponding to a trajectory $q(t)=(0,t,0)$, and take another control $\du(t)=(\du_1(t),\du_2(t))$. Let  us describe curves $\bb{m}(t,\du)$ for $m=1,2,\hdots, p,p+1$ in this setting.

We have $\D^m_{q(t)} X_1\equiv0$ for every $m=1,2,\hdots$, while only non-trivial derivatives of $X_2$ along the considered trajectory are 
$$\D_{q(t)} X_2[\cdot]=-\<\dd x_1,\cdot >\cdot\pa_{x_2}\quad\text{and}\quad \D^p_{q(t)}X_2[\underbrace{\cdot,\cdot,\hdots,\cdot}_p]=p! \<\dd x_1,\cdot>^p\cdot\pa_{x_3}\ .$$
It follows that  equations \eqref{eqn:bb_k} look as follows
\begin{align*}
    \dotbb{1}(t)&=-\bb{1}_1(t)\cdot\pa_{x_2}+\du_1(t) \cdot\pa_{x_1}+\du_2(t)\cdot\pa_{x_2}\\
    \dotbb{2}(t)&=-\bb{2}_1(t)\cdot\pa_{x_2}-\du_2(t)\cdot \bb{1}_1(t)\cdot\pa_{x_2}\\
    \hdots\\
    \dotbb{m}(t)&=-\bb{m}_1(t)\cdot\pa_{x_2}-\du_2(t)
    \cdot \bb{m-1}_1(t)\cdot \pa_{x_2}\\
    \hdots\\
    \dotbb{p}(t)&=-\bb{p}_1(t)\cdot\pa_{x_2}-\du_2(t)\cdot \bb{p-1}_1(t)\cdot\pa_{x_2}+{\left(\bb{1}_1(t)\right)^p\cdot\pa_{x_3}}\\
     \dotbb{p+1}(t)&= -\bb{p+1}_1(t)\cdot\pa_{x_2}-\du_2(t)\cdot\bb{p}_1(t)\cdot\pa_{x_2}+p \left(\bb{1}_1(t)\right)^{p-1}\bb{2}_1(t)\cdot\pa_{x_3}+\du_2(t)\left(\bb{1}_1(t)\right)^p\cdot\pa_{x_3}\ .
\end{align*}
As we see the first appearance of the $\pa_{x_3}$ direction happens in degree $p$. 

Since the initial values are $\bb{m}_i(0)=0$, for every $m=1,2,\hdots,p+1$ and $i=1,2,3$, it is easy to solve the above system:
\begin{align*}
 \bb{1}_1(t)&=\int_0^t \du_1(\tau)\dd\tau\\ \bb{1}_2(t)&=\int_0^t\du_2(\tau)\dd\tau-\int_0^t \bb{1}_1(\tau)\dd\tau= \int_0^t\du_2(\tau)\dd\tau-\int_0^t \left(\int_0^\tau\du_1(s)\dd s\right)\dd\tau\\
 \bb{2}_2(t)&=-\int_0^t\du_2(\tau)\cdot \bb{1}_1(\tau)\dd\tau =-\int_0^t\du_2(\tau)\cdot \left(\int_0^\tau\du_1(s)\dd s\right)\dd\tau\\
 \bb{p}_3(t)&=\int_0^t \left(\bb{1}_1(\tau)\right)^p\dd\tau=\int_0^t \left(\int_0^\tau\du_1(s)\dd s\right)^p\dd\tau\\
 \bb{p+1}_3(t)&=\int_0^t \du_2(
\tau)\cdot\left(\bb{1}_1(\tau)\right)^p\dd\tau=\int_0^t \du_2(\tau)\cdot\left(\int_0^\tau\du_1(s)\dd s\right)^p\dd\tau\ ,
\end{align*}
with all other components $\bb{m}_i(t)$, for $m=1,2,\hdots,p+1$, equal to zero. 
\end{example}
\section{Addapted coordinates}\label{sec:coordinates}

\paragraph{The idea of adapted coordinates}
Recall the expansion
$$\End^t(u+s\cdot\du)\overset{loc}=q(t)+s\cdot \bb{1}(t,\du)+{s^2}\cdot \bb{2}(t,\du)+{s^3}\cdot \bb{3}(t,\du)+\hdots$$ (with $\bb{m}(t,\du)$'s as in Lemma~\ref{lem:b_k}) which describes the derivatives of the end-point maps $\End^t$ at $u$ in  the direction  $\du$. As we have seen, the formulas for $\bb{m}(t,\du)$'s are quite complicated. Our goal is to introduce, for each  $m=1,2,\hdots$, a  family of $\R^n$-valued maps $\q{m}(t,\du)=\left(\q{m}_a(t,\du)\right)_{a=1,2,\hdots,n}$,  which satisfy the following two properties:
\begin{enumerate}[(i)]
    \item there is 1-1 correspondence between $(\bb{1}(t,\du),\hdots,\bb{k}(t,\du))$ and $\left(\q{1}(t,\du),\hdots,\q{k}(t,\du)\right)$ for each natural $k$;
    \item \label{point:ii} the derivatives $\dotq{m}(t,\du)$ are linear in $\du$. 
\end{enumerate}
It is therefore justified to treat $\left(\q{1},\hdots,\q{k}\right)\in\R^{n\cdot k}$ as coordinates on the space of $k$-jets $\T^k_{q(t)}M\ni (\bb{1},\hdots,\bb{k})$'s. We will  call them \emph{adapted coordinates}, meaning that they are specially adapted to the geometry of the control system \eqref{eqn:lin_ctrl_syst} along the trajectory $q(t)$.
\smallskip 

The idea is to define $\q{k}(t,\du)$'s comes from Lemma~\ref{lem:composition} -- we apply formulas \eqref{eqn:composition} describing the expansion of a composition of a function and a curve to $s\mapsto q_s(t)$, with $\bb{m}(t,\du)$'s playing the roles of $\gamma^{(m)}$'s and some, a priori unknown, $m$-linear symmetric forms $\lin{m}_a(t):\underbrace{\T_{q(t)}M\times\hdots\times\T_{q(t)}M}_{m}\lra\R$ playing the roles of the derivatives $\D^m_{\gamma(0)}f$'s. The evolution of these $\lin{m}_a(t)$ will be determined later to guarantee that the evolution of all $\q{m}(t,\du)$'s satisfies condition $\eqref{point:ii}$.  The geometric meaning of this construction will be discussed in  Lemma~\ref{lem:adapted_geometric}. 

\paragraph{Adapted coordinates in degrees 1, 2, and 3} Let us study a few examples of a low degree before passing to the general case. For each $a=1,2\hdots,n$ define:
\begin{align}
\label{eqn:q_1}
\q{1}_a(t,\du):=&\lin{1}_a(t)\left[\bb{1}(t,\du)\right]\\
\label{eqn:q_2}
\q{2}_a(t,\du):=&\lin{1}_a(t)\left[\bb{2}(t,\du)\right]+\frac 1{2!}\lin{2}_a(t)\left[\bb{1}(t,\du),\bb{1}(t,\du)\right]\\
\label{eqn:q_3}
\q{3}_{a}(t,\du):=&\lin{1}_a(t)[\bb{3}(t,\du)]+\lin{2}_a(t)\left[\bb{2}(t,\du),\bb{1}(t,\du)\right]+\\
\notag&\frac{1}{3!}\lin{3}_a(t)\left[\bb{1}(t,\du),\bb{1}(t,\du),\bb{1}(t,\du)\right]\ ,
\end{align}
 We postulate $\{\lin{1}_a(0)\ |\ a=1,2,\hdots,n\}$ to be a fixed basis of $\T^\ast_{q(0)}M$, and set  $\lin{k}_a(0)=0$ for $k=2$, $k=3$, and each $a=1,2,\hdots,n$. Now the evolution of $\q{1}_a(t)$ reads as
\begin{align*}
\dotq{1}_a(t,\du)=&\dotlin{1}_a\left[\bb{1}\right]+\lin{1}_a\left[\dotbb{1}\right]=\underbrace{\dotlin{1}_a\left[\bb{1}\right]+\lin{1}_a\left[\sum_i u_i\D X_i\left[ \bb{1}\right]\right]}_{\text{assume $=0$ to satisfy \eqref{point:ii}}}+\lin{1}_a\left[\sum_i \du_i X_i\right]=\\
&\sum_i \du_i \lin{1}_a[X_i]\ .
\end{align*}
Therefore we postulate the following evolution of $\lin{1}_a(t)$:
\begin{equation}
\label{eqn:ew_Phi_1}
\dotlin{1}_a(t)[b]+\sum_i u_i\cdot\lin{1}_a(t)\left[\D X_i[b]\right]=0\quad\text{ for every $b\in \T_{q(t)}M$.}
\end{equation}
Clearly, the above evolution is dual to the evolution on $\T_{q(t)}M$ by the flow of the time-dependent vector field $q\longmapsto \sum_i u_i(t)X_i(q)$. This guarantees that  for each $t\in[0,T]$ covectors $\lin{1}_a(t)$ form a basis of $\T^\ast_{q(t)} M$. In consequence, the procedure of constructing maps  $\q{k}(t)$ from   $\bb{k}(t)$'s will be reversible for each $k=1,2,\hdots$. 
\medskip

In degree two we have:
\begin{align*}
\dotq{2}_{a}(t,\du)=&\dotlin{1}_a\left[\bb{2}\right]+\lin{1}_a\left[\dotbb{2}\right]+\frac 1{2!}\dotlin{2}_a\left[\bb{1},\bb{1}\right]+\lin{2}_a\left[\dotbb{1},\bb{1}\right]=\\
&\underbrace{\dotlin{1}_a\left[\bb{2}\right]+
\sum_i u_i\cdot\lin{1}_a\left[\D X_i\left[\bb{2}\right]\right]}_{=0\text{ by \eqref{eqn:ew_Phi_1}}}+\\
&\underbrace{\frac 1{2!}\dotlin{2}_a\left[\bb{1},\bb{1}\right]+\sum_i u_i\cdot\left\{\frac 1{2!}\lin{1}_a\left[\D^2X_i\left[\bb{1},\bb{1}\right]\right] +\lin{2}_a\left[\D X_i\left[\bb{1}\right],\bb{1}\right]\right\}}_{\text{assume $=0$ to satisfy \eqref{point:ii}}}+\\
&\sum_i \du_i\left\{\lin{1}_a\left[\D  X_i[\bb{1}]\right]+\lin{2}_a\left[X_i, \bb{1}\right]\right\}\ .
\end{align*}

Therefore we postulate 
$$
\dotlin{2}_a[b,b]+\sum_i u_i\cdot\left\{ 2\lin{2}_a\left[\D X_i[b],b\right]+\lin{1}_a\left[\D^2 X_i[b,b]\right]\right\} =0\qquad\text{ for every $b\in \T_{q(t)}M$.}
$$
If the above holds then, since $\lin{2}_a$ is symmetric 2-linear,  for every $b,b'\in \T_{q(t)}M$ we have:
\begin{equation}
\label{eqn:ew_Phi_2}
\dotlin{2}_a[b,b']+\sum_i u_i\cdot\left\{\lin{1}_a\left[\D^2 X_i[ b,b']\right]+\lin{2}_a\left[\D X_i[b],b'\right]+\lin{2}_a\left[\D X_i[b'],b\right]\right\}=0\ .
\end{equation}

Analogously in degree 3: 
\begin{align*}
\dotq{3}_{a}(t)&=\dotlin{1}_a\left[\bb{3}\right]+\lin{1}_a\left[\dotbb{3}\right]+\dotlin{2}_a\left[\bb{2},\bb{1}\right]+\lin{2}_a\left[\dotbb{2},\bb{1}\right]+\lin{2}_a\left[\bb{2},\dotbb{1}\right]+\frac 1{3!}\dotlin{3}_a\left[\bb{1},\bb{1},\bb{1}\right]+\frac 1{2!}\lin{3}_a\left[\dotbb{1},\bb{1},\bb{1}\right] =\\
&\underbrace{\dotlin{1}_a\left[\bb{3}\right]+\sum_i u_i\cdot
\lin{1}_a\left[\D X_i[\bb{3}]\right]}_{=0\text{ by \eqref{eqn:ew_Phi_1}}}+\\
&\underbrace{\dotlin{2}_a\left[\bb{2},\bb{1}\right]+\sum_i u_i\cdot\left\{\lin{1}_a\left[\D^2 X_i[\bb{2},\bb{1}]\right]+\lin{2}_a\left[\D X_i[\bb{2}],\bb{1}\right]+\lin{2}_a\left[\D X_i[\bb{1}],\bb{2}\right]\right\}}_{\text{$=0$ by \eqref{eqn:ew_Phi_2}}}+\\
&\underbrace{\frac 1{3!}\dotlin{3}_a\left[\bb{1},\bb{1},\bb{1}\right]+\sum_i u_i \left\{\frac 1{3!}\lin{1}_a\left[\D^3 X_i[\bb{1},\bb{1},\bb{1}]\right]+\frac 1{2!}\lin{2}_a\left[\D^2 X_i[\bb{1},\bb{1}],\bb{1}\right]+\frac 1{2!}\lin{3}_a\left[\D X_i[\bb{1}],\bb{1},\bb{1}\right]\right\}}_{\text{assume $=0$ to satisfy \eqref{point:ii}}}\\
&\sum_i \du_i\left\{\lin{1}_a\left[\D X_i[\bb{2}]\right]+\lin{1}_a\left[\D^2X_i[\bb{1},\bb{1}]\right]+\lin{2}_a\left[\D X_i[\bb{1}], \bb{1}\right]+\lin{2}_a\left[X_i, \bb{2}\right]+\frac 1{2!}\lin{3}_a\left[X_i,\bb{1},\bb{1}\right]\right\}\ .
\end{align*}

Therefore we postulate that for every $b\in \T_{q(t)}M$
\begin{equation}
    \label{eqn:ew_Phi_3}
    \dotlin{3}_a[b,b,b]+\sum_i u_i\cdot\left\{3\lin{3}_a\left[ \D X_i[b],b,b\right]+3\lin{2}_a\left[\D^2 X_i[b,b],b\right]  +\lin{1}_a\left[\D^3 X_i[b,b,b]\right]\right\}=0 
\end{equation}

Summing up, under assumptions \eqref{eqn:ew_Phi_1}--\eqref{eqn:ew_Phi_3}, our new coordinates \eqref{eqn:q_1}--\eqref{eqn:q_3},  evolve as follows: 
\begin{align*}
\dotq{1}_a=&\sum_i \du_i \Phi^{(1)}_a\left[X_i\right]\\
\dotq{2}_a=&\sum_i \du_i\left\{ \Phi^{(1)}_a\left[\D X_i[\bb{1}]\right]+\Phi^{(2)}_a\left[X_i, \bb{1}\right]\right\} \\
\dotq{3}_a=&\sum_i \du_i\left\{\Phi^{(1)}_a\left[\D X_i[\bb{2}]\right] +\Phi^{(1)}_a\left[\D^2 X_i[\bb{1},\bb{1}]\right]+\Phi^{(2)}_a\left[\D X_i[\bb{1}], \bb{1}\right]+\Phi^{(2)}_a\left[X_i, \bb{2}\right]+\frac 1{2!}\Phi^{(3)}_a\left[X_i,\bb{1},\bb{1}\right]\right\}
\end{align*}

\paragraph{The general case}  The pattern observed for terms up to order 3, continues in all degrees. We may thus generalize the transformations \eqref{eqn:q_1}--\eqref{eqn:q_3} (satisfying conditions \eqref{eqn:ew_Phi_1}--\eqref{eqn:ew_Phi_3}) in the following definition.

\begin{df} Consider a trajectory $q(t)$ of a control-linear system \eqref{eqn:lin_ctrl_syst} corresponding to the control $u\in \U$. Choose a local coordinate system on $M$ and for each $m=1,2,\hdots$ define a time-dependent family of symmetric $m$-linear maps
 $$\lin{m}(t)=\left(\lin{m}_a(t)\right)_{a=1,2,\hdots,n}:\underbrace{\T_{q(t)}M\times\hdots\times\T_{q(t)}M}_{\text{$m$ times}}\lra \R^n$$ 
 by setting the following conditions:
\begin{itemize}
    \item we set $\{\lin{1}_a(0)\}_{a=1,2\hdots,n}$ to be a basis of $\T^\ast_{q(0)}M$, while for $m=2,3,\hdots$ maps $\lin{m}(0)$ can be arbitrary, 
    \item $\lin{m}(t)$'s are subject to the following ODEs:
 \begin{equation}
 \label{eqn:ew_Phi}
     \dotlin{m}(t)[\underbrace{b,....,b}_{\text{$m$ times}}] + \sum_i u_i(t)\cdot\left\{\sum_{s=1}^m {m \choose s}\lin{m-s+1}(t)\left[\D^s_{q(t)}X_i[\underbrace{b,\hdots, b}_{\text{$s$ times}}], \underbrace{b,\hdots,b}_{\text{$m-s$ times}}\right]\right\}=0\ .
 \end{equation}
\end{itemize}

For each $k=1,2,\hdots$ and $t\in[0,T]$ the assignment
$$\AddCoord{k}(t): \T^k_{q(t)}M\ni(\bb{1},\hdots,\bb{k})\longmapsto(\q{1},\hdots,\q{k})\in \R^{n\cdot k}\ , $$
given by (within the notation convention of Rem.~\ref{multi_ind_not})
\begin{equation}
\label{eqn:def_q_k}
    \q{m}=\left(\q{m}_a\right)_{a=1,2,\hdots,n}:=\sum_{\alpha,\ w(\alpha)=m} \frac 1{\alpha!} \cdot \lin{|\alpha|}(t) \left[\vec{b}^{\alpha}\right]\ 
    \end{equation}
will be called the \emph{transformation of adapted coordinates} of degree $k$ at time $t$.
\end{df}

We summarize basic properties of the above notion in the following result.

\begin{thm}\label{thm:adapted}
For every $t\in[0,T]$, and every $k$, the transformation of adapted coordinates $\AddCoord{k}(t)$ does not depend on the choice of a local coordinate system. Further it is an isomorphism between the fibre $\T^k_{q(t)}M\ni (\bb{1},\hdots,\bb{k})$ and $\R^{n\cdot k}\ni (\q{1},\hdots,\q{k})$ respecting the natural graded structures in which $\deg(\bb{i})=\deg(\q{i})=i$.
\smallskip

For any control $\du\in\T_u\U\simeq \U$ consider the $k$-jet of the curve $s\mapsto \End^t[u+s\cdot\du]$ at $s=0$  given in local coordinates by 
$$\End^t(u+s\cdot\du)\overset{loc}=q(t)+s\cdot \bb{1}(t,\du)+s^2\cdot\bb{2}(t,\du)+\hdots s^k\cdot\bb{k}(t,\du)
+o(s^k)\ .$$
Let $(\q{1}(t,\du),\hdots,\q{k}(t,\du))$ be the image of the $k$-jet $(\bb{1}(t,\du),\hdots,\bb{k}(t,\du))$ under the transformation of adapted coordinates $\AddCoord{k}(t)$. That is,
for $m=1,2,\hdots,k$ we have 
\begin{equation}
\label{eqn:def_q_k_particular}
    \q{m}(t,\du):=\sum_{\alpha,\ w(\alpha)=m} \frac 1{\alpha!} \cdot \lin{|\alpha|}(t) [\vec{b}(t,\du)^{\alpha}]\ .
    \end{equation}  
    Then, curves $\q{m}(t,\du)$ satisfy the following system of ODEs:
\begin{equation}
\label{eqn:dot_q_k}
    \dotq{m}(t,\du)=\sum_{i} \du_i(t)\cdot\left\{\sum_{\alpha,\ \beta,\ w(\alpha)+w(\beta)=m-1} \frac{1}{\alpha!} \cdot  \frac 1{\beta !} \cdot  \lin{|\alpha|+1}(t) \left[\D^{|\beta|}_{q(t)}X_i[\vec{b}(t,\du)^\beta]
,\vec{b}(t,\du)^{\alpha}\right]\right\}
\end{equation}
\end{thm}

\begin{proof}
The fact that the map $\AddCoord{k}(t)$ is well-defined will be proved in the next paragraph (Lemma~\ref{lem:adapted_geometric}), where we shall give a geometric interpretation of this construction.

Let us address the problem of reversibility. Note that for each $t\in[0,T]$ the family $\{\lin{1}_a(t)\}_{a=1,\hdots,n}$ is a basis of $\T^\ast_{q(t)}M$. Indeed this is easily seen from $\eqref{eqn:ew_Phi_1}$, which easily implies that 
$$\lin{1}_a(t)[\bb{1}(t)]=\operatorname{const}$$
whenever $\bb{1}(t)$ is subject to 
$\dotbb{1}(t)=\sum_i u_i(t)\cdot \D_{q(t)}X_i[\bb{1}(t)]$. The latter is the evolution in $\T M$ induced by the flow of the (time-dependent) 
control vector field $q\mapsto \sum_i u_i(t) X_i(q)$. As this flow consists of diffeomorphisms (see \cite[Sec. 2]{MJ_WR_contact_pmp}), then the evolution of $\bb{1}(t)$ is reversible, and hence so is the evolution of $\lin{1}(t):=\left(\lin{1}_a(t)\right)_{a=1,2\hdots,n}$.

We conclude that the assignment 
$$\T_{q(t)}M\ni \bb{1}\longmapsto \left(\q{1}=\lin{1}(t)[\bb{1}]\right)\in \R^n$$
is reversible for each $t\in[0,T]$. Similarly, so is 
$$\T^2_{q(t)}M\ni (\bb{1},\bb{2})\longmapsto \left(\q{1},\q{2}=\lin{1}(t)[\bb{2}]+\lin{2}(t)[\bb{1},\bb{1}]\right)\in \R^{2n}\ , $$
as we may express $\bb{1}$ in terms of $\q{1}$ and then reverse $\lin{1}(t)$ to get $\bb{2}$ from $\q{2}$. 

In higher degrees an analogous argument proves that formulas \eqref{eqn:def_q_k} define isomorphisms between the space of $k$-jets of curves in $M$ at $q(t)$ and the space $\R^{k\cdot n}$ 
$$\T^k_{q(t)}M\ni (\bb{1},\hdots,\bb{k})\longmapsto(\q{1},\hdots,\q{k})\in \R^{k\cdot n} \ .$$
Finally note that formulas \eqref{eqn:def_q_k} respect the gradings given by $\deg(\bb{m})=\deg(\q{m})=m$.
\medskip

Now let us prove the remaining part of the assertion.  Differentiation of \eqref{eqn:def_q_k_particular} gives us 
\begin{align*}
    \dotq{m}(t,\du)=\sum_{\alpha,\ w(\alpha)=m} \dotlin{|\alpha|} [\vec{b}(t,\du)^{\alpha}]+\sum_{\alpha,\ w(\alpha)=m}\sum_{l\leq m} \frac{1}{\alpha!} \cdot a_l\cdot \lin{|\alpha|} [\dotbb{l},\vec{b}(t,\du)^{\alpha-1_l}]\ , 
\end{align*}
where naturally $\alpha-1_l=(a_1,\hdots,a_{l-1},a_l-1,a_{l+1},\hdots,a_k)$. Now note that by \eqref{eqn:ew_Phi} the derivatives $\dotlin{|\alpha|}_a$ are linear in $u_i$'s, while by \eqref{eqn:bb_k} the derivatives $\dotbb{l}$ have a part linear in $u_i$'s and a part linear in $\du_i$'s. Therefore, $\dotq{m}$ splits into a part linear in $u_i$'s and a part linear in $\du_i$'s, i.e
$$\dotq{m}(t,\du)=\sum_i u_i\cdot A_{i} +\sum_i \du_i\cdot B_{i} \ ,$$
where $A_{i}$ and $B_{i}$ do not depend on neither $u_i$'s, nor $\du_i$'s.
We will  now calculate these two parts separately, proving that the first one is zero, while the second one is precisely the right-hand side of \eqref{eqn:dot_q_k}. This will end the proof. 
\medskip

Let us begin with the second part. Since $\dotlin{m}(t)$ do not depend on $\du_i$'s, only the derivatives $\dotbb{l}$ contribute giving
\begin{align*}
    \sum_i\du_i\cdot B_{i}=  &\sum_{\alpha,\ w(\alpha)=m}\sum_{l\leq m} \frac{1}{\alpha!} \cdot a_l\cdot \lin{|\alpha|} [\text{part of $\dotbb{l}$ linear in $\du_i$'s},\vec{b}(t)^{\alpha-1_l}]\overset{\eqref{eqn:bb_k}}=\\
    &\sum_{\alpha,\ w(\alpha)=m}\sum_{l\leq m} \frac{1}{\alpha!} \cdot a_l\cdot \lin{|\alpha|} \left[\sum_{i} \du_i\cdot\left\{\sum_{\beta,\  w(\beta)=l-1} \frac 1{\beta !} \cdot\D^{|\beta|}_{q(t)}X_i[\vec{b}(t)^\beta]\right\}
    ,b(t)^{\alpha-1_l}\right]=\\
    &\sum_{i} \du_i\cdot\left\{\sum_{\alpha,\ w(\alpha)=m}\sum_{l\leq m} \frac{1}{(\alpha-1_l)!} \cdot \sum_{\beta,\  w(\beta)=l-1} \frac 1{\beta !} \cdot  \lin{|\alpha-1_l|+1} \left[\D^{|\beta|}_{q(t)}X_i[\vec{b}(t)^\beta]
    ,\vec{b}(t)^{\alpha-1_l}\right]\right\}
\end{align*}
Now observe that a triple $(\alpha,l,\beta)$ where $w(\alpha)=m$, $a_l>0$ and $w(\beta)=l-1$ uniquely determines a pair of multi-indexes $(\gamma=\alpha-1_l,\beta)$ satisfying $w(\gamma)+w(\beta)=(m-l)+l-1=m-1$. Thus we may change the summation order in the expression above to obtain 
$$\sum_i \du_i\cdot B_{i}=\sum_{i} \du_i\cdot\left\{\sum_{\gamma,\ \beta,\ w(\gamma)+w(\beta)=m-1} \frac{1}{\gamma!} \cdot  \frac 1{\beta !} \cdot  \lin{|\gamma|+1} [\D^{|\beta|}_{q(t)}X_i[\vec{b}(t)^\beta]
,\vec{b}(t)^{\gamma}]\right\}$$
in agreement with \eqref{eqn:dot_q_k}.
\medskip

Now let us calculate the part of $\dotq{m}(t,\du)$ linear in $u_i$'s. Proceeding as in the previous part of the proof we arrive at
\begin{align*}
    \sum_i u_i&\cdot A_{i}=\sum_{\alpha,\ w(\alpha)=m}\dotlin{|\alpha|} [\vec{b}(t,\du)^{\alpha}]+\sum_{\alpha,\ w(\alpha)=m}\sum_{l\leq m} \frac{1}{\alpha!} \cdot a_l\cdot \lin{|\alpha|} [\text{part of $\dotbb{l}$ linear in $u_i$'s},\vec{b}(t,\du)^{\alpha-1_l}]\overset{\eqref{eqn:bb_k}}=\\
    &\sum_{\alpha,\ w(\alpha)=m} \dotlin{|\alpha|} [\vec{b}(t)^{\alpha}]+\sum_{\alpha,\ w(\alpha)=m}\sum_{l\leq m} \frac{1}{\alpha!} \cdot a_l\cdot \lin{|\alpha|} \left[\sum_{i} u_i\cdot\left\{\sum_{\beta,\  w(\beta)=l} \frac 1{\beta !} \cdot\D^{|\beta|}_{q(t)}X_i[\vec{b}(t)^\beta]\right\}
    ,b(t)^{\alpha-1_l}\right]=\\
    &\sum_{\alpha,\ w(\alpha)=m} \dotlin{|\alpha|} [\vec{b}(t)^{\alpha}]+\sum_{i} u_i\cdot\left\{\sum_{\alpha,\ w(\alpha)=m}\sum_{l\leq m} \frac{1}{(\alpha-1_l)!} \cdot \sum_{\beta,\  w(\beta)=l} \frac 1{\beta !} \cdot  \lin{|\alpha-1_l|+1} \left[\D^{|\beta|}_{q(t)}X_i[\vec{b}(t)^\beta]
    ,\vec{b}(t)^{\alpha-1_l}\right]\right\}=\\
    &\sum_{\alpha,\ w(\alpha)=m} \dotlin{|\alpha|} [\vec{b}(t)^{\alpha}]+\sum_{i} u_i\cdot\left\{\sum_{\gamma,\ \beta,\ w(\gamma)+w(\beta)=m} \frac{1}{\gamma!} \cdot  \frac 1{\beta !} \cdot  \lin{|\gamma|+1} [\D^{|\beta|}_{q(t)}X_i[\vec{b}(t)^\beta]
,\vec{b}(t)^{\gamma}]\right\},
\end{align*}
where the changes of summation order in the last passage are made analogously as before.
We would like to show that the above equals to zero. To see this note that formula \eqref{eqn:ew_Phi} describes the evolution of a $m$-linear map $\lin{m}$. Due to the symmetry of $\lin{m}$ we also have
$$ - \dotlin{m}[v_1,....,v_m] =\sum_i u_i\cdot \left\{\sum_{\sigma \in \Sigma_m} \sum_{s=1}^m \frac{1}{s!(m-s)!}\lin{m-s+1}\left[ \D^s_{q(t)}X_i[v_{\sigma(1)},\hdots, v_{\sigma(s)}], v_{\sigma(s+1)},\hdots,v_{\sigma(m)}\right]\right\}$$
for every $m$-tuple of vectors $v_1,v_2\hdots,v_m\in\T_{q(t)}M$. Hence
\begin{align*}
    \sum_{\alpha,\ w(\alpha)=m}& \frac{1}{\alpha!} \cdot \dotlin{|\alpha|} [\vec{b}(t)^{\alpha}]=\\
    &-\sum_i u_i \sum_{\alpha,\ w(\alpha)=m} \frac{1}{\alpha!} \sum_{\sigma \in \Sigma_{|\alpha|}} \sum_{s=1}^{|\alpha|} \frac{1}{s!({|\alpha|}-s)!}\lin{{|\alpha|}-s+1}[ \D^s_{q(t)}X_i[\bb{\sigma(1)},\hdots, \bb{\sigma(s)}], \bb{\sigma(s+1)},\hdots,\bb{\sigma({|\alpha|})}]
\end{align*}
Due to the fact that $\lin{{|\alpha|}-s+1}_a$ and $ \D^s_{q(t)}X_i$ are multi-linear, we may identify two permutations that have the same $s$ initial elements. The number of such permutations for a given $\alpha$ is precisely $s!(|\alpha|-s)!$. It follows that we can identify the above triple sum over $(\alpha, \sigma,s)$ as a sum over sub-divisions of the multi-index $\alpha$ into a sum $\alpha=\beta+\gamma$ (so $s=|\beta|$, $|\alpha|-s=|\gamma|$, and $w(\alpha)=w(\beta)+w(\gamma)$). The latter are however taken with multiplicity $\binom{\alpha}{\beta}=\prod_i\binom{\alpha_i}{\beta_i}$, as $\binom{\alpha_i}{\beta_i}$ is a number of ways an ordered sequence of $\alpha_i$ elements $(b_i,\hdots,b_i)$ can be divided between in two groups of $\beta_i$ and $\gamma_i=\alpha_i-\beta_i$ elements. Since $\alpha!=\binom{\alpha}{\beta}\cdot\beta!\cdot\gamma!$. We arrive at   
$$\sum_{\alpha,\ w(\alpha)=m} \dotlin{|\alpha|} [\vec{b}(t,\du)^{\alpha}]=-\sum_{i} u_i\cdot\left\{\sum_{\gamma,\ \beta,\ w(\gamma)+w(\beta)=m} \frac{1}{\gamma!} \cdot  \frac 1{\beta !} \cdot  \lin{|\gamma|+1} [\D^{|\beta|}_{q(t)}X_i[\vec{b}(t,\du)^\beta]
,\vec{b}(t,\du)^{\gamma}]\right\}\ ,$$
proving that $\sum_i u_i\cdot A_{i}=0$. This ends the proof.
\end{proof}

\begin{example}[Generalized Martinet system, part 2.]\label{ex:2}
 Let us apply the above theory to Example~\ref{ex:1}. Equation \eqref{eqn:ew_Phi_1} reads as
$$\dotlin{1}(t)[\cdot]=-\lin{1}(t)\left[\D_{q(t)}X_2[\cdot]\right]=\<\dd x_1,\cdot>\cdot\lin{1}(t)[\pa_{x_2}]\ .$$
For a natural choice of initial value $\lin{1}(0)=\operatorname{Id}_{\R^3}$ this implies that $\lin{1}(t)$ is the following linear isomorphism on $\R^3$
$$\lin{1}(t)[\pa_{x_1}]=\pa_{x_1}+t\cdot\pa_{x_2},\quad  \lin{1}(t)[\pa_{x_2}]=\pa_{x_2}\quad \text{and}\quad\lin{1}(t)[\pa_{x_3}]=\pa_{x_3} . $$ 
In higher degrees $\lin{m}(t)\equiv0$, for $m=2,3,\hdots,p-1,p+1,\hdots$,  with the sole exception of a single component of $\lin{p}(t)$, namely 
$$\lin{p}(t)[\underbrace{\pa_{x_1},\hdots,\pa_{x_1}}_p]=-p!\cdot t\cdot\pa_{x_3}\ .$$ 

We can now use formula \eqref{eqn:def_q_k} definining the transformation of adapted coordinates to arrive at 
\begin{align*}
    \q{1}(t)&=\lin{1}(t)\left[\bb{1}(t)\right]\\
     \q{2}(t)&=\lin{1}(t)\left[\bb{2}(t)\right]+\lin{2}(t)\left[\bb{1}(t),\bb{1}(t)\right]=\lin{1}(t)\left[\bb{2}(t)\right]\\
     \hdots\\
     \q{m}(t)&=\lin{1}(t)\left[\bb{m}(t)\right]\\
     \hdots\\
     \q{p}(t)&=\lin{1}(t)\left[\bb{p}(t)\right]+\frac 1{p!}\lin{p}(t)\left[\bb{1}(t)\hdots,\bb{1}(t)\right]=\lin{1}(t)\left[\bb{p}(t)\right]-{t\left(\bb{1}_1(t)\right)^p\cdot\pa_{x_3}}\\
     \q{p+1}(t)&=\lin{1}(t)\left[\bb{p+1}(t)\right]+\frac 1{(p-1)!}\lin{p}(t)\left[\bb{1}(t)\hdots,\bb{1}(t),\bb{2}(t)\right]=\\
     &\phantom{==} \lin{1}(t)\left[\bb{p+1}(t)\right]-p\cdot t\left(\bb{1}_1(t)\right)^{p-1}\bb{2}_1(t)\cdot\pa_{x_3}
\end{align*}

We can differentiate the above formulas, or use evolution equations \eqref{eqn:dot_q_k} to get evolution equations for $\q{m}$'s which have a bit simpler form then those for $\bb{m}$'s
\begin{align*}
    \dotq{1}(t)&=\du_1(t) \cdot(\pa_{x_1}+t\cdot \pa_{x_2})+\du_2(t)\cdot\pa_{x_2}\\
    \dotq{2}(t)&=-\du_2(t)\cdot\bb{1}_1(t)\cdot\pa_{x_2}\\
    \hdots\\
    \dotq{m}(t)&=-\du_2(t)
    \cdot \bb{m-1}_1(t)\cdot \pa_{x_2}\\
    \hdots\\
    \dotq{p}(t)&=-\du_2(t)\cdot \bb{p-1}_1(t)\cdot\pa_{x_2}-\du_1(t)\cdot p\cdot t \left(\bb{1}_1(t)\right)^{p-1}\cdot\pa_{x_3}\\
    \dotq{p+1}(t)&=-\du_2(t)\cdot \bb{p}_1(t)\cdot\pa_{x_2}+\du_2(t)\left(\bb{1}_1\right)^p\cdot\pa_{x_3}+\du_1(t)\cdot p(p-1)\cdot t \left(\bb{1}_1(t)\right)^{p-2}\bb{2}_1(t)\cdot\pa_{x_3}\ .
\end{align*}
The solutions are
\begin{align*}
 \q{1}_1(t)&=\bb{1}_1(t)=\int_0^t \du_1(\tau)\dd\tau\\ \q{1}_2(t)&=\int_0^t\du_2(\tau)+\int_0^t\tau\cdot\du_1(\tau)\dd\tau\\
 \q{2}_2(t)&=-\int_0^t\du_2(\tau)\cdot \bb{1}_1(\tau)\dd\tau =-\int_0^t\du_2(\tau)\cdot \left(\int_0^\tau\du_1(s)\dd s\right)\dd\tau\\
 \q{p}_3(t)&=-t\left(\bb{1}_1(t)\right)^p=-t\cdot \left(\int_0^t\du_1(\tau)\dd \tau\right)^p\\
\q{p+1}_3(t)&=\int_0^t\du_2(\tau)\left(\bb{1}_1(\tau)\right)^p\dd\tau=\int_0^t\du_2(\tau)\cdot \left(\int_0^\tau\du_1(s)\dd s\right)^p\dd\tau\ ,
\end{align*}
with all other components $\q{m}_i(t)$, for $m=1,2,\hdots,p+1$, equal to zero.
\end{example}

\paragraph{Geometric interpretation of the adapted coordinates}
So far the construction of adapted coordinates may be seen as a computational trick which helps to simplify the evolution equations for curves $\bb{m}(t,\du)$. However, as we shall see in this paragraph, it has a natural geometric interpretation. 
\medskip

Consider $q(t)$ a trajectory of the control-linear system \eqref{eqn:lin_ctrl_syst} corresponding to the control $u\in\U$ and initial condition $q(0)=q_0$. 
Let now $q_s(t)$  be a family of solutions of \eqref{eqn:lin_ctrl_syst} corresponding to the same control $u$ and a smooth curve of initial conditions $s\mapsto q_s(0)$, passing through $q_0=q_0(0)$ (so that $q_{s=0}(t)=q(t)$).  It turns out that the assignment $q_s(0)\longmapsto q_s(t)$ gives rise to a well-defined map on  $k$-jets at $s=0$:\footnote{This is a simple consequence of the theorem about the regularity of the dependence of a solution of an ODE (in the sense of Caratheodory) on the initial condition.}
$$\T^k_{q_0}M\ni[q_s(0)]_{\sim k}\longmapsto[q_s(t)]_{\sim k}\in \T^k_{q(t)}M\ . $$
We shall denote this map -- the $k$-th tangent lift of the flow of the control vector field $X_u$ --  by $\T^k X^t_u$. It describes the natural action on $k$-jets of the evolution (flow) of the control system \eqref{eqn:lin_ctrl_syst} for a fixed control $u$. The following results explain the relation of $\T^k X^t_u$ with the transformation of adapted coordinates. 
 
\begin{lem}\label{lem:adapted_geometric} Let $q(t)$ be a trajectory of the control-linear system \eqref{eqn:lin_ctrl_syst} corresponding to a control $u\in \U$. Consider a 1-parameter family of $k$-jets $\vec{b}(t)=(\bb{1}(t),\hdots,\bb{k}(t))\in \T^k_{q(t)}M$. Then 
$$\vec{b}(t)=\T^kX_u^t[\vec{b}(0)]\quad \text{for every $t\in[0,T]$ if and only if}$$
 $\vec{b}(t)=(\bb{1}(t),\hdots,\bb{k}(t))$ is constant under the transformation of adapted coordinates $\AddCoord{k}(t)$. 
\smallskip

In other words 
$$\left(\AddCoord{k}(t)\right)^{-1}[(\q{1},\hdots,\q{k})]=\vec{b}(t)=\T^k X_u^t[\vec{b}(0)]\ ,$$
where $(\q{1},\hdots,\q{k})=\AddCoord{k}(0)[\vec{b}(0)]$. In particular, the construction of the transformation of adapted coordinates does not depend on the choice of the local coordinate system on $M$. 
\end{lem}
\begin{proof}
Choose a local coordinate system on $M$ and let $q_s(t)$ be a family of solutions of \eqref{eqn:lin_ctrl_syst} as described at the beginning of this paragraph. Now 
$$\dot{q_s}(t)=\sum_iu_i(t)X_i(q_s(t))\quad\text{with the initial condition $q_s(0)$,}$$
hence $\vec{b}(t)=(\bb{1}(t),\hdots,\bb{k}(t))$ -- the $k$-jet of $s\mapsto q_s(t)$ at $s=0$, which by definition equals to $\T^kX_u^t[\vec{b}(0)]$ -- satisfies an ODE obtained by a repetitive differentiation of the above equation with respect to $s$. This will, however, be a special case of an ODE obtained by an analogous differentiation of equation \eqref{eqn:q_s_t} if we take $\du=0$. As we know from Lemma~\ref{lem:b_k}, the latter equation is just \eqref{eqn:bb_k}, and so $\vec{b}(t)$ is subject to \eqref{eqn:bb_k} for $\du=0$. Now, by the results of Theorem~\ref{thm:adapted}, after an application of the transformation of adapted coordinates, the curve $\vec{q}(t)=(\q{1}(t),\hdots,\q{k}(t))=\AddCoord{k}(t)[\vec{b}(t)]$ would satisfy equation \eqref{eqn:dot_q_k} with $\du=0$. Thus $\dot{\vec{q}}(t)=0$ which ends the proof. 
\end{proof}

\paragraph{Getting rid of $\bb{m}$'s}
Our initial motivation behind the construction of adapted coordinates was to simplify the evolution equations for the polynomial expansion of $s\mapsto \End^t[u+s\cdot\du]$. As a result, we got the formula \eqref{eqn:dot_q_k}. Note, however, that as $\q{m}(t,\du)$'s and $\bb{m}(t,\du)$'s are related by means of the transformation of adapted coordinates which is invertible, it is possible to express the right-hand side of \eqref{eqn:dot_q_k} in terms of $\q{m}(t,\du)$'s only. Thus we can interpret equation \eqref{eqn:dot_q_k} as a time-dependent control-linear system in variables $(\q{1},\hdots,\q{k})$ (or to view it differently a control-affine system in variables  $(t,\q{1},\hdots,\q{k})$), where $\du\in\U$ plays the role of a control. This observation is summarised as follows.
 
\begin{thm}\label{thm:adapted_q}
Consider a trajectory $q(t)$ of a control-linear system \eqref{eqn:lin_ctrl_syst} corresponding to the control $u\in \U$ and choose $k\in \mathbb{N}$.  Then there exists an affine distribution $\mathcal{D}^{(k)}=\pa_t+\operatorname{span}_{\R}\{Y_1^{(k)},\hdots,Y_l^{(k)}\}$ on $\R\times\underbrace{\R^n\times\hdots\times\R^n}_{k}\ni(t,\q{1},\hdots,\q{k})$ with the following properties: 
\begin{enumerate}[(i)]
    \item consider a natural graded space structure on $\R^{1+kn}$ by setting $\deg(t)=0$ and $\deg (\q{m}_a)=m$. Then for every $i=1,2,\hdots,l$ the field $Y_i^{(k)}$ is a homogeneous vector field of degree $-1$
    \item for every control $\du\in \T_u\U\simeq \U$ the curve $p^{(k)}(t):=(t, \q{1}(t,\du),\hdots,\q{k}(t,\du))$ -- where \\ $(\q{1}(t,\du),\hdots,\q{k}(t,\du))$ is the image of the $k$-jet of $s\mapsto \End^t[u+s\cdot\du]$ at $s=0$ under the transformation of adapted coordinates $\AddCoord{k}(t)$ considered in Theorem~\ref{thm:adapted} -- is a trajectory  of the control-affine system defined by $\mathcal{D}^{(k)}$  for the control  $\du$, that is
    \begin{equation}
    \label{eqn:aff_ctr_sys}\tag{$\Sigma^{(k)}(\du)$}
    \dot p^{(k)}(t)=\pa_t+\sum_i \du_i(t)\cdot Y_i^{(k)}(p^{(k)}(t))\qquad\text{with  $p^{(k)}(0)=(0,0,\hdots,0)$.}
\end{equation}
\end{enumerate}
Moreover, for each $k=2,3,\hdots$, the field $Y^{(k)}_i$ projects to $Y^{(k-1)}_i$ under the mapping $$\R^{1+nk}\ni(t,\q{1},\hdots,\q{k-1},\q{k})\longmapsto (t,\q{1},\hdots,\q{k-1})\in\R^{1+n(k-1)} \ .$$
\end{thm}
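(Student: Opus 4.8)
The plan is to read the required vector fields directly off the evolution equations \eqref{eqn:dot_q_k} of Theorem~\ref{thm:adapted}, and then to extract their structural properties from the weight bookkeeping already present there. First I would note that the right-hand side of \eqref{eqn:dot_q_k} (with $k$ replaced by each $m\leq k$) is linear in $\du$, so that for every $i=1,\dots,l$ and every $m$ the coefficient of $\du_i$ in $\dotq{m}_a$ is an expression $R^{(m)}_{i,a}$ built from the $t$-dependent tensors $\lin{\cdot}_a(t)$ and $\D^{\cdot}_{q(t)}X_i$ and from the jet components $\bb{1},\dots,\bb{m-1}$ (the constraint $w(\alpha)+w(\beta)=m-1$ forces only $b$'s of index $<m$ to appear). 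Using the reversibility established at the beginning of the proof of Theorem~\ref{thm:adapted}, I would substitute $\bb{j}=\bb{j}(t,\q{1},\dots,\q{j})$ to express each $R^{(m)}_{i,a}$ as a function of $t$ and of $\q{1},\dots,\q{m-1}$, and set
\[
Y_i^{(k)}(t,\q{1},\dots,\q{k}):=\sum_{m=1}^{k}\sum_{a=1}^{n}R^{(m)}_{i,a}\,\pa_{\q{m}_a}.
\]
Point (ii) and equation \eqref{eqn:aff_ctr_sys} are then immediate: the curve $p^{(k)}(t)$ satisfies $\dot t=1$ and, by \eqref{eqn:dot_q_k}, $\dotq{m}_a=\sum_i\du_i R^{(m)}_{i,a}$, which is exactly $\dot p^{(k)}=\pa_t+\sum_i\du_i Y_i^{(k)}$; the initial condition $p^{(k)}(0)=(0,\dots,0)$ follows since $q_s(0)=q_0$ for all $s$ gives $\bb{j}(0)=0$, hence $\q{j}(0)=0$ by \eqref{eqn:def_q_k}.

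Next I would prove the homogeneity claim (i). For the grading $\deg(t)=0$, $\deg(\q{s}_a)=s$ a vector field $f\,\pa_{\q{m}_a}$ has degree $\deg(f)-m$, so it suffices to show that each $R^{(m)}_{i,a}$ is homogeneous of degree $m-1$ in the variables $\q{1},\dots,\q{m-1}$. The key point is that the correspondence $(\bb{1},\dots,\bb{j})\leftrightarrow(\q{1},\dots,\q{j})$ is a \emph{graded} isomorphism: formula \eqref{eqn:def_q_k} expresses $\q{j}$ through terms $\lin{|\alpha|}_a[\vec b^{\,\alpha}]$ with $w(\alpha)=j$, each homogeneous of degree $j$ in the $b$'s (with $\deg\bb{s}=s$); hence by induction its inverse is graded as well and each $\bb{j}$ is a purely degree-$j$ function of the $\q{}$'s. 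Since in $R^{(m)}_{i,a}$ the $b$'s enter with total weight $w(\alpha)+w(\beta)=m-1$ while the tensors $\lin{\cdot}_a(t)$ and $\D^{\cdot}_{q(t)}X_i$ carry weight $0$, substituting the graded inverse turns $R^{(m)}_{i,a}$ into a homogeneous function of degree $m-1$ in the $\q{}$'s. This yields $\deg(Y_i^{(k)})=-1$.

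Finally, for the projection property I would observe that, by the same weight constraint, for every $m<k$ the coefficient $R^{(m)}_{i,a}$ depends only on $(t,\q{1},\dots,\q{m-1})$ and is \emph{literally} the function defining the $\pa_{\q{m}_a}$-component of $Y_i^{(k-1)}$. Thus, writing $\pi:\R^{1+nk}\to\R^{1+n(k-1)}$ for the coordinate-forgetting map, deleting the $\pa_{\q{k}_a}$-components of $Y_i^{(k)}$ produces a field $\pi$-related to $Y_i^{(k-1)}$, which is the assertion. The main obstacle, and the only step beyond bookkeeping, is the homogeneity statement (i): one must ensure that eliminating the $\bb{}$'s introduces no lower-order terms, i.e. that the inverse of \eqref{eqn:def_q_k} is graded of pure degree. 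This is guaranteed precisely because \eqref{eqn:def_q_k} is block-triangular with invertible degree-preserving leading part $\lin{1}$, so the weight $w(\alpha)+w(\beta)=m-1$ governing \eqref{eqn:dot_q_k} transfers verbatim to the $\q{}$-grading.
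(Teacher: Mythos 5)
Your proposal is correct and follows essentially the same route as the paper's proof: define $Y_i^{(k)}$ by reading the coefficients of $\du_i$ off \eqref{eqn:dot_q_k} and substituting the $\bb{j}$'s by $\q{j}$'s via the graded isomorphism from Theorem~\ref{thm:adapted}, with homogeneity of degree $-1$ following from the weight count $w(\alpha)+w(\beta)=m-1$. You actually supply more detail than the paper on two points it leaves implicit — that the inverse of \eqref{eqn:def_q_k} is graded of pure degree, and the projection compatibility of the $Y_i^{(k)}$ — but the underlying argument is the same.
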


\begin{proof}

 Fix a natural $k$ and consider evolution equations \eqref{eqn:dot_q_k}. They are of the form
$$\dotq{m}(t,\du)=\sum_i\du_i(t)\cdot \widetilde{Y}^{(m)}_{i}(t,\vec{b}(t,\du))\ ,$$
where $m=1,2,\hdots,k$ and $\widetilde{Y}^{(m)}_{i}(t,\vec{b}(t,\du))$ is polynomial in $\vec{b}(t,\du):=(\bb{1}(t,\du),\hdots,\bb{k}(t,\du))$ of graded degree $m-1$. Now we may substitute $\vec{b}(t,\du)$ with $\vec{q}(t,\du):=(\q{1}(t,\du),\hdots,\q{k}(t,\du))$ using the inverse of the transformation of adapted coordinates $\AddCoord{k}(t)$. Note that the latter transformation intertwines the natural graded structures in the space of $\bb{m}$'s and $\q{m}$'s. Thus we get
$$\dotq{m}(t,\du)=\sum_i\du_i(t)\cdot Y_{i}^{(m)}(t,\vec{q}(t,\du))\ ,$$
where  $Y^{(m)}(t,\vec{q})=\widetilde{Y}^{(m)}_{i}(t,\AddCoord{k}(t)^{-1}[\vec{q}])$ is polynomial in $\vec{q}$ of graded degree $m-1$. \end{proof}

Let us see how this result looks in low degrees. Consider the linear isomorphism  $\lin{1}(t):\T_{q(t)}M\ra\R^n$ and denote its inverse by $A(t):\R^n\ra \T_{q(t)}M$.
Then 
\begin{align*}
\bb{1}(t,\du)=A(t)[\q{1}(t,\du)]\quad\text{and}\quad \bb{2}(t,\du)=A(t)[\q{2}(t,\du)]-\frac 1{2!}A(t)\lin{2}[A\q{1}(t,\du),A\q{1}(t,\du)]\ ,
\end{align*}
and hence equations \eqref{eqn:aff_ctr_sys} look as follows
\begin{align*}
\dotq{1}=&\sum_i \du_i\cdot  \lin{1}[X_i]\\
\dotq{2}=&\sum_i \du_i\cdot \left\{\lin{1}\left[\D X_i[A \q{1}]\right]+\lin{2}\left[X_i, Aq^{(1)}\right]\right\} \\
\dotq{3}=&\sum_i \du_i\cdot \left\{\lin{1}\left[\D X_i[A\q{2}]\right]-\frac{1}{2!}\lin{1}\left[A\lin{2}[A\q{1},A\q{1}\right]+\frac{1}{2!}\lin{1}\left[\D^2X_i[A\q{1},A\q{1}\right]+\right.\\
&\lin{2}\left[\D X_i[A\q{1}], A\q{1}\right]+\lin{2}\left[X_i, A\q{2}\right]-\\
&\left.\frac{1}{2!}\lin{2}\left[X_i,A\lin{2}[A\q{1},A\q{1}]\right]+\frac1{2!}\lin{3}\left[X_i,A\q{1},A\q{1}\right]\right\}
\end{align*}
Unfortunately, we were unable to derive exact formulas for $\dotq{m}$'s in every degree, as they become very complicated with the increasing $m$.  

\begin{example}[Generalized Martinet system, part 3.]\label{ex:3} In the previous Example~\ref{ex:2} we derived adapted coordinates and their evolution equations for a generalized Martinet system. In that particular situation it is easy to reverse the transformation of adapted coordinates to arrive at
\begin{align*}
    \bb{m}_1(t)&=\q{m}_1\qquad \text{for $m=1,2,\hdots, p, p+1$;}\\
    \bb{m}_2(t)&=\q{m}_2(t)-t\cdot \q{m}_1(t)\qquad \text{for $m=1,2,\hdots, p,p+1$; and}\\
    \bb{m}_3(t)&=\q{m}_3(t)\qquad \text{for $m=1,2,\hdots,p-1$; and}\\
\bb{p}_3(t)&=\q{p}_3(t)+t\left(\q{1}_1(t)\right)^p\\
\bb{p+1}_3(t)&=\q{p+1}_3(t)+t\cdot p\left(\q{1}_1(t)\right)^{p-1}\q{2}_1(t)
\end{align*}
Thus evolution equations become
\begin{align*}
    \dotq{1}(t)&=\du_1(t) \cdot(\pa_{x_1}+t\cdot \pa_{x_2})+\du_2(t)\cdot\pa_{x_2}\\
    \dotq{2}(t)&=-\du_2(t)\cdot\q{1}_1(t)\cdot\pa_{x_2}\\
    \hdots\\
    \dotq{m}(t)&=-\du_2(t)
    \cdot \q{m-1}_1(t)\cdot \pa_{x_2}\\
    \hdots\\
    \dotq{p}(t)&=-\du_2(t)\cdot \q{p-1}_1(t)\cdot\pa_{x_2}-\du_1(t)\cdot p\cdot t \left(\q{1}_1(t)\right)^{p-1}\cdot\pa_{x_3}\\
    \dotq{p+1}(t)&=-\du_2(t)\cdot \q{p}_1(t)\cdot\pa_{x_2}-\du_1(t)\cdot p(p-1)\cdot t \left(\q{1}_1(t)\right)^{p-2}\q{2}_1\cdot\pa_{x_3}+\du_2(t)\cdot p\left(\q{1}_1\right)^p\pa_{x_3}\ .
\end{align*}
Therefore the fields $Y^{(p+1)}_1$ and $Y^{(p+1)}_2$ described in Theorem~\ref{thm:adapted_q} are
\begin{align*}Y^{(p)}_1&=\pa_{\q{1}_1}+t\cdot \pa_{\q{1}_2}-p\cdot t\left(\q{1}_1\right)^{p-1}\cdot\pa_{\q{p}_3}-p(p-1)\cdot t\left(\q{1}_1\right)^{p-2}\q{2}_1\cdot\pa_{\q{p+1}_3} \quad\text{and}\\
Y^{(p)}_2&=\pa_{\q{1}_2}-\sum_{m=1}^{p+1} \q{m-1}_1\cdot \pa_{\q{m}_2}+p \left(\q{1}_1\right)^p \cdot\pa_{\q{p+1}_3}\ .
\end{align*}
\end{example}
\section{Examples and applications}\label{sec:applications}

\subsection{Adapted coordinates for invariant systems on Lie groups}\label{ssec:groups}

\paragraph{The geometric setting}
Let us consider a special situation of a control-linear system \eqref{eqn:lin_ctrl_syst} with $M=G$ being a Lie group, and $\mathcal{D}$ being a left-invariant distribution on $G$, determined by an $l$-dimensional subspace $\spann\{e_1,\hdots,e_l\}$ of the Lie algebra $\g=\T_eG$. For simplicity of notation, in the calculations we will assume that $G$ is a matrix group, and we will denote the matrix multiplication by $\circ$. We shall, however, address a general situation at the end of this subsection. 
\smallskip

In the above setting vector fields spanning $\mathcal{D}$ are simply $X_i(g)=g\circ e_i$, and we may naturally identify a control $u(t)=(u_1(t),u_2(t),\hdots,u_l(t))$ at $t$ with an element $\sum_iu_i(t)e_i\in\g$. Now equation \eqref{eqn:lin_ctrl_syst} reads simply as
\begin{equation}
\label{eqn:cs_group}
    \dot g(t)=g(t)\circ u(t); \qquad g(0)=e,
\end{equation}
where for simplicity we set the initial point to be the group identity $e\in G$. Now a simple calculation shows that, for a given control $\du\in\T_u\U\simeq \U$, curves $\bb{m}(t)$ satisfy the following system of ODEs: 
\begin{equation}\label{eqn:bb_group}
\begin{split}
    \dotbb{1}(t)&=\bb{1}(t)\circ u(t)+g(t)\circ \du(t)\\
\dotbb{2}(t)&=\bb{2}(t)\circ u(t)+2\cdot\bb{1}(t)\circ \du(t)\\
    &\hdots\\
    \dotbb{k}(t)&=\bb{k}(t)\circ u(t)+k\cdot \bb{k-1}(t)\circ \du(t)
\end{split}
\end{equation}

\paragraph{Calculation of adapted coordinates}
Now we would like to apply Theorem~\ref{thm:adapted} and construct adapted coordinates in the above setting. The comparison of the above system \eqref{eqn:bb_group} with general formulas \eqref{eqn:bb_k} reveals that $\D_{g(t)}X_i[b]=b\circ e_i$, and $\D^m_{g(t)}X_i\equiv 0$ for $m>1$. Hence, by the results of Theorem~\ref{thm:adapted}, also $\lin{m}_a(t)\equiv 0$ for all $m>1$. On the other hand, covectors $\lin{1}_a(t)$ should satisfy
$$0=\dotlin{1}_a(t)[b]+\lin{1}_a\left[\sum_i u_i(t)\D_{g(t)}X_i[b] \right]=\dotlin{1}_a(t)[b]+\lin{1}_a\left[b\circ u(t)\right]\ ,$$
with $\lin{1}_a(0)=:\xi_a$ being a fixed basis of $\T^\ast _eG=\g^\ast$. Let us identify $\g$ with $\R^n$ by means of 
$$I:\g\ni b\longmapsto \left(\<\xi_1,b>,\hdots,\<\xi_n,b>\right)\in \R^n\ .$$
Now a simple calculation shows that $\lin{1}(t)[b]:=I^{-1}\left(\lin{1}_1(t)[b],\hdots,\lin{1}_n(t)[b] \right)$ is just 
$$\lin{1}(t)[b]=b\circ g(t)^{-1}\ .$$
By \eqref{eqn:dot_q_k}, we conclude that for each $m=1,2,3,\hdots$ the adapted coordinates $\q{m}(t)=\lin{1}(t)\left[\bb{m}(t)\right]=\bb{m}(t)\circ g(t)^{-1}\in\g$ satisfy
$$\dotq{m}(t)=m\sum_i\du_i(t) \lin{1}(t)\left[\D_{g(t)}X_i\left[\bb{m-1}(t)\right]\right]=m\ \lin{1}(t)\left[\bb{m-1}(t)\circ\du(t)\right]=m\ \bb{m-1}(t)\circ\du(t)\circ g(t)^{-1}.$$
In this particular situation, we may easily get rid of $\bb{m}$'s to arrive at the following system of ODEs:
\begin{equation}\label{eqn:q_group}
\begin{split}
    \dotq{1}(t)&=g(t)\circ \du(t)\circ g(t)^{-1} =\Ad_{g(t)}\du(t)\\
    \dotq{2}(t)&=2\ \bb{1}(t)\circ \du(t)\circ g(t)^{-1}=2\ \q{1}(t)\circ g(t)\circ \du(t)\circ g(t)^{-1} =2\ \q{1}(t)\circ \Ad_{g(t)}\du(t)\\
    &\hdots\\
    \dotq{k}(t)&=k\ \q{k-1}(t)\circ \Ad_{g(t)}\du(t)\ .
\end{split}
\end{equation}
This leads to the following set of solutions
\begin{equation}\label{eqn:b_group_solutions}
\begin{split}
    \bb{1}(t)&=\left[\int_0^t \Ad_{g(t_1)}\du(t_1) \dd t_1\right]\circ g(t) \\
    \bb{2}(t)&=2\left[\int_0^t \q{1}(t_1)\circ \Ad_{g(t_1)}\du(t_1)\dd t_1 \right]\circ g(t)\\
    &\qquad\qquad\qquad=2\left[\int_0^t\int_0^{t_1}\Ad_{g(t_2)}\du(t_2)\dd t_2\circ \Ad_{g(t_1)}\du(t_1)\dd t_1 \right]\circ g(t)\\
    &\hdots\\
    \bb{k}(t)&=k!\left[ \int_0^t\int_0^{t_1}\hdots \int _0^{t_{k-1}}\Ad_{g(t_k)}\du(t_k)\circ\hdots\circ \Ad_{g(t_1)}\du(t_1)\dd t_k\hdots\dd t_1\right]\circ g(t) ,
\end{split}
\end{equation}
which are in perfect agreement with the results of \cite{ELD_new}, where the same formulas were obtained by a different method. 

\paragraph{Remark about general Lie groups} Finally, we would like to comment about the geometric sense of the formulas \eqref{eqn:b_group_solutions} on a general Lie group, where, at the level of the Lie algebra $\g$, there is no obvious analogue of the matrix multiplication $\circ$. To do this observe that our basic equation \eqref{eqn:cs_group} reads as 
\begin{equation}
\label{eqn:cs_group_geometry}
    \dot g(t)=F(g(t),u(t))\ ,
\end{equation}
where $F:G\times \g\ra \T G$ is the left trivialization of $\T G$, i.e. $F(g,a)=\T L_g[a]$, with $L_g:G\ra G$ given by $L_g(h)=gh$. For our purposes, it will be convenient to treat $F$ as a restriction of the map $\T m:\T G\times \T G\ra \T G$ -- the tangent map to the group multiplication $m:G\times G\ra G$ -- to $G\times \T_eG\subset \T G\times \T G$, where on the first leg we embed $G$ into $\T G$ as the zero section. Now system \eqref{eqn:bb_group} describes the $k$-jet of \eqref{eqn:cs_group_geometry}, i.e. it is constructed by feeding $\T^k F:\T^k G\times \T^k\g\lra \T^k\T G$ with the $k$-th jet $(g,\bb{1},\hdots,\bb{k})\in \T^k_gG$, and $(u,\du,0,\hdots,0)\in \T^k\g\equiv \underbrace{\g\times\g\hdots\times\g}_{k+1}$, and composing the result with the canonical flip $\kappa:\T^k\T G\simeq \T\T^k G$ to get a vector tangent to $\T^k G$. Remembering that $F$ is a restriction of $\T m$, we conclude that, up to the canonical identification $\kappa:\T^k\T G\simeq \T \T^kG$, the multiplication in the system \eqref{eqn:bb_group} is the multiplication $\T^k\T m:\T^k\T G\times \T^k\T G\ra\T^k\T G$ applied to a particular pair of $k$-jets embedded in $\T^k\T G$. At this point, it is worth to remark, that $\T^k\T G$ with the multiplication $\T^k\T m$ is actually a Lie group -- see \cite[37.16]{KMS_1993}. \smallskip

Finally, in the setting of Lie groups we pass to adapted coordinates by the right translation by $g(t)^{-1}$ acting on the $k$-jet $(q,\bb{1},\hdots,\bb{k})$, i.e 

$$(e,\q{1},\hdots,\q{k})=\T^k R_{g(t)^{-1}}(g,\bb{1},\hdots,\bb{k})$$

Now since in the multiplication we have $m(R_gf,h)=m(fg,h)=m(f,gh)=m(f,L_gh)$, setting $ (g,\bb{1},\hdots,\bb{k})=\T^kR_{g(t)}(e,\q{1},\hdots,\q{k})$ in \eqref{eqn:bb_group} does not changes the multiplication $\T^k F\subset \T^k \T m$, but translates to applying $\Ad_{g(t)}$ to the element $\du(t)$ and vanishing the $u(t)$-part because of the evolution equations.\smallskip

Summing up, the multiplication on the right-hand side of the system \eqref{eqn:q_group} is the multiplication $\T^k\T m$ of an element  $(e,\q{1},\hdots,\q{k})\in \T^k_eG\subset \T^k\T G$ by an element $(0,\Ad_{g(t)}\du(t),0,\hdots,0)\in\g^{k+1}\equiv\T^k\g\subset \T^k\T G$. This, however, does not mean that we have a multiplication between elements of $\g$. In general, there is no canonical identification of $\T^k_e G$ with $\g^k$ (such an identification requires a choice of local coordinates -- see Remark~\ref{rem:jets} -- or, as happens with matrix groups, an embedding of $G$ into $M_N(\R)$). Thus the resulting formulas \eqref{eqn:b_group_solutions} have only a local sense. 

\subsection{Optimality conditions in sub-Riemannian geometry}\label{ssec:sr}

In Sec.~\ref{sec:coordinates} we were able to associate a  control-affine system \eqref{eqn:aff_ctr_sys} with the problem of calculating the derivatives (or, more precisely, the jet-expansion) of the end-point map at a given trajectory of a control-linear system \eqref{eqn:lin_ctrl_syst}. The results of  Sussmann and Jurdjevic \cite{Sussmann_Jurdjevic_1972} give simple criteria for local non-controllability of a control system at a given point. It turns out that for $k=1$ and $k=2$ these criteria are closely related to necessary optimality conditions in sub-Riemannian geometry. Below we shall explain this relation.

\paragraph{Sub-Riemannian geodesic problem}
Consider the control-linear system \eqref{eqn:lin_ctrl_syst} with $\Omega=L^2([0,T],\R^l)$. For a trajectory $q(t)$, which corresponds to a control $u\in L^2([0,T],\R^l)$, we define its \emph{energy} at $t$ as
$$E^t(u):=\frac 12\int_0^t \sum_i u_i(\tau)^2\dd \tau\ . $$
Given $T>0$, and a pair of points $q_0,q_1\in M$, the question is to find a trajectory of \eqref{eqn:lin_ctrl_syst} which joins $q_0=q(0)$ with $q_1=q(T)$ while minimizing the energy $E^T$. Solutions of such a problem are called \emph{minimizing sub-Riemannian geodesics} -- see \cite{Montgomery_2006, Rifford_2014} for more details.\medskip

There is a fundamental relation between optimality in the above sense, and the properties of the \emph{extended end-point map} $\END^t:\U\lra M\times\R$; defined by $\END^t(u):=(\End^t(u),E^t(u))$.

\begin{fact} If $\END^T$ is open at $u$, then the corresponding trajectory  cannot be a sub-Riemannian geodesics. 
\end{fact}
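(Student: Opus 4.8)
The plan is to prove the contrapositive: assuming the trajectory $q(t)=\End^t(u)$ is a minimizing sub-Riemannian geodesic joining $q_0=q(0)$ to $q_1=q(T)$, I would show that $\END^T$ cannot be open at $u$. Write $\END^T(u)=(q_1,c)\in M\times\R$, where $c=E^T(u)$ is the (minimal) energy realized by this geodesic.

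First I would recall the meaning of openness at a point: the map $\END^T$ is open at $u$ precisely when $\END^T(V)$ is a neighborhood of $\END^T(u)=(q_1,c)$ for every neighborhood $V$ of $u$ in $\U$. Fixing any such $V$, openness would force $\END^T(V)$ to contain a basic product neighborhood $U_M\times(c-\delta,c+\delta)$ of $(q_1,c)$, with $U_M\subset M$ an open set containing $q_1$ and some $\delta>0$. In particular, the point $(q_1,c-\tfrac{\delta}{2})$ would lie in the image $\END^T(V)$.

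The key step is then to read off the contradiction with minimality. The membership $(q_1,c-\tfrac{\delta}{2})\in\END^T(V)$ means, by the very definition of $\END^T=(\End^T,E^T)$, that there exists a control $v\in V$ with $\End^T(v)=q_1$ and $E^T(v)=c-\tfrac{\delta}{2}$. Thus $v$ steers $q_0$ to the same endpoint $q_1$ in time $T$, but with energy strictly smaller than $c$. This contradicts the assumption that $u$ attains the minimal energy among all controls joining $q_0$ to $q_1$ in time $T$; hence $\END^T$ is not open at $u$, which is exactly the desired claim.

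I do not expect a genuine obstacle here: the argument is purely topological and uses only the definition of the extended end-point map together with the product topology on $M\times\R$. The one point requiring a moment's care is ensuring that openness supplies a competitor with \emph{exactly} the same spatial endpoint $q_1$, not merely a nearby one; this is guaranteed because any neighborhood of $(q_1,c)$ in the product $M\times\R$ necessarily contains the vertical segment $\{q_1\}\times(c-\delta,c+\delta)$, so lower-energy controls reaching precisely $q_1$ are always available.
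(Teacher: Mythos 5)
Your argument is correct: openness of $\END^T=(\End^T,E^T)$ at $u$ forces the image of any neighborhood of $u$ to contain the vertical segment $\{q_1\}\times(c-\delta,c+\delta)$, hence a control reaching $q_1$ in time $T$ with strictly smaller energy, contradicting minimality. The paper states this Fact without proof (it is the classical observation underlying first- and higher-order optimality conditions), and your contrapositive argument is exactly the standard justification it implicitly relies on.
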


 All the known optimality criteria  for sub-Riemannian geodesics, with the sole exception of \cite{Hakavuori_LeDonne_2016}, depend on the above result. They use some version of a (higher-order) open mapping theorem (see \cite{Sussmann_OMT} for a set of general results of this type) linking the properties of (first or higher) derivatives of the end-point map $\END^T$ with the openness of $\END^T$ itself. In what follows we shall revise some of these classical results using the construction of adapted coordinates from Theorems~\ref{thm:adapted} and \ref{thm:adapted_q}.

\paragraph{Sub-Riemannian optimality conditions of degree one}
It follows easily from the standard open mapping theorem that if $\image \D_u\END^T$ (which is of course a vector space) equals $\T_{(q(T),E^T(u))}(M\times\R)$, then $\END^T$ is open at $u$, and hence the related trajectory $q(t)$ cannot be optimal. Thus an obvious necessary condition for optimality is that $\image \D_u\END^T$ should be a proper vector subspace in the tangent space $\T_{(q(T),E^T(u))}(M\times\R)$. Curves satisfying this condition are called  \emph{extremals}. Among all extremals we distinguish a subclass of \emph{abnormal trajectories}. 
\begin{df} A trajectory $q(t)$ of \eqref{eqn:lin_ctrl_syst} corresponding to a control $u\in L^2([0,T],\R^l)$ is called \emph{abnormal} if $\image \D_u\End^T$ is a proper vector subspace of the tangent space $\T_{q(T)}M$. In particular, an abnormal trajectory is an extremal. 
\end{df}

Below we will concentrate solely on this class of curves as they are the most interesting (and mysterious) type of extremals \cite{Montgomery_2006}. The basic reason for this is that an abnormal trajectory is an extremal regardless of the properties of the energy functional, thus being abnormal is a geometric property of the control system itself. 

\begin{lem}
\label{lem:results_deg_1}
Let $q(t)$, with $t\in[0,T]$, be an abnormal SR trajectory  corresponding to the control $u\in L^2([0,T],\R^l)$. Then the related  control-affine system $(\Sigma^{(1)})$ in $\R\times\R^n\ni (t,\q{1})$ described in Thm~\ref{thm:adapted_q} is not controllable at any point $(t,0)$. In particular, there exist an $n$-tuple $(\phi^1,\hdots,\phi^n)\in \R^n\setminus\{\bm 0\}$ such that the covector $\phi(t)=\sum_a \phi^a\lin{1}_a(t)\in \T^\ast_{q(t)}M$ satisfies the following condition
    $$\<\phi(t),X_i(q(t))>=0\quad \text{for all $i=1,2,\hdots,l$ and all $t\in[0,T]$}.$$
\end{lem}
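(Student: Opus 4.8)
The plan is to convert the abnormality of $q(t)$ into a first integral of the affine system $(\Sigma^{(1)})$; this first integral is simultaneously the obstruction to controllability and the source of the required covector. First I would record the explicit form of $(\Sigma^{(1)})$. Specializing \eqref{eqn:dot_q_k} to $k=1$ (equivalently, reading off the degree-one formula displayed just before Theorem~\ref{thm:adapted}) gives $\dotq{1}_a(t)=\sum_i\du_i(t)\,\lin{1}_a(t)[X_i(q(t))]$, so that $(\Sigma^{(1)})$ reads $\dot t=1$, $\dotq{1}_a=\sum_i\du_i\,\lin{1}_a(t)[X_i(q(t))]$ with $\q{1}(0)=0$. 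Since $\q{1}_a(t)=\lin{1}_a(t)[\bb{1}(t)]$ and $\bb{1}(t)=\D_u\End^t[\du]$, at the final time $\q{1}(T)=\lin{1}(T)[\D_u\End^T[\du]]$; as $\lin{1}(T)$ is a linear isomorphism $\T_{q(T)}M\to\R^n$ (established at the start of the proof of Theorem~\ref{thm:adapted}), the set of reachable $\q{1}(T)$ over all $\du$ is exactly $\lin{1}(T)[\image\D_u\End^T]$.

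Next I would feed in the hypothesis. By definition of abnormality $\image\D_u\End^T$ is a proper subspace of $\T_{q(T)}M$, so there is a nonzero $\lambda\in\T^\ast_{q(T)}M$ annihilating it. Writing $\lambda=\sum_a\phi^a\lin{1}_a(T)$ in the basis $\{\lin{1}_a(T)\}_a$ fixes a tuple $(\phi^1,\dots,\phi^n)\in\R^n\setminus\{\bm 0\}$, and I set $\phi(t):=\sum_a\phi^a\lin{1}_a(t)$. Then $\<\lambda,\D_u\End^T[\du]>=\sum_a\phi^a\q{1}_a(T)=\int_0^T\sum_i\du_i(\tau)\<\phi(\tau),X_i(q(\tau))>\dd\tau$ vanishes for every $\du\in L^2([0,T],\R^l)$. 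By the arbitrariness of $\du$ and the fundamental lemma of the calculus of variations, $\<\phi(\tau),X_i(q(\tau))>=0$ for almost every $\tau$ and every $i$; since $\tau\mapsto\phi(\tau)$ and $\tau\mapsto X_i(q(\tau))$ are continuous, this holds for all $\tau\in[0,T]$, which is precisely the asserted ``in particular'' statement.

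Finally I would deduce non-controllability. Along any trajectory of $(\Sigma^{(1)})$ one has $\frac{\dd}{\dd t}\big(\sum_a\phi^a\q{1}_a(t)\big)=\sum_i\du_i(t)\<\phi(t),X_i(q(t))>=0$, so $\sum_a\phi^a\q{1}_a$ is a first integral. Issued from any point $(t_0,0)$ it equals $0$ for all later times, hence every trajectory starting at $(t_0,0)$ is confined to the proper affine hyperplane $\{\sum_a\phi^a\q{1}_a=0\}\subset\R\times\R^n$. Consequently the reachable set from $(t_0,0)$ has empty interior, and $(\Sigma^{(1)})$ fails to be controllable at $(t_0,0)$ for every $t_0=t\in[0,T]$.

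The only delicate point—which I regard as the main obstacle—is pinning down the right notion of controllability: since the drift of $(\Sigma^{(1)})$ is $\pa_t$, time can never be reversed, so ``non-controllability'' must be read as the failure of the reachable set to have nonempty interior, i.e.\ the Sussmann--Jurdjevic accessibility/openness criterion of \cite{Sussmann_Jurdjevic_1972}. The invariant hyperplane exhibited above is exactly the certificate such a criterion demands, so once the first-integral identity is in place the conclusion is immediate; the remaining passages (identifying the reachable set with $\lin{1}(T)[\image\D_u\End^T]$ and upgrading almost-everywhere vanishing to everywhere) are routine.
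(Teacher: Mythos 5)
Your proposal is correct and follows essentially the same route as the paper: identify the reachable set of $(\Sigma^{(1)})$ at time $T$ with $\lin{1}(T)\left[\image \D_u\End^T\right]$, use abnormality to get a proper subspace, and pull an annihilating covector back along the $\lin{1}_a(t)$ to obtain the pointwise condition. The only cosmetic difference is that you extract the vanishing of $\<\phi(\tau),X_i(q(\tau))>$ for all $\tau$ via the integral representation and the fundamental lemma of the calculus of variations, whereas the paper argues through the nestedness of the reachable sets $\mathcal{R}_\tau\subset\mathcal{R}_T$ and tangency of the control fields to that subspace; both yield the same invariant-hyperplane certificate of non-controllability.
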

Observe that due to \eqref{eqn:ew_Phi_1}, the covector $\phi(t)$ described above is a \emph{Pontryagin covector}, i.e  its evolution is compatible with the evolution of \eqref{eqn:lin_ctrl_syst} corresponding to $u$:
$$\<\dot\phi(t), b>+\<\phi(t),\sum_i u_i(t)\cdot \D_{q(t)} X_i[b]>=0\quad\text{ for every $b\in \T_{q(t)}M$.}$$
\begin{proof}
By definition, since $q(t)$ is an abnormal trajectory, we know that $\image \D_u\End^T=\{\bb{1}(T,\du)\that \du\in L^2([0,T],\R^l)\}$ is a proper vector subspace of $\T_{q(T)}M$. Passing to the adapted coordinates $\q{1}\in\R^n$ -- which in degree one involves a linear transformation -- we conclude that the space $V:=\{\q{1}(T,\du)\that \du\in L^2([0,T],\R^l)\}$ is a proper vector subspace of $\R^n$. However, by the results of Thm~\ref{thm:adapted_q}, space $V$ is just  $\mathcal{R}_T$ -- the reachable  set at time $T$ of the control system 
$$\dotq{1}(t)=\sum_i\du_i(t)\cdot Y^{(1)}_i(t)=\sum_i\du_i(t)\cdot \lin{1}(t)[X_i(q(t))];\qquad \q{1}(0)=0.
$$
 Clearly, by extending a control $\du\in L^2([0,\tau],\R^l)$ by zero on $[\tau,T]$, also $\mathcal{R}_\tau\subset\mathcal{R}_T=V$ for every $\tau\in [0,T]$. We conclude that the fields $Y^{(1)}_i(t)$ must be tangent to $V$ for all $t\in[0,T]$. Hence, each non-zero covector  $\phi=(\phi^1,\hdots,\phi^n)\in(\R^n)^\ast\simeq 
 \R^n$ annihilating $V$, will satisfy 
$0=\<\phi,Y^{(1)}_i(t)>=\<\phi,\lin{1}(t)\left[X_i(q(t))\right]>$ for every $i=1,2,\hdots,l$ and every $t\in[0,T]$. This ends the proof.
\end{proof}

\paragraph{Optimality conditions of degree two}
A basis of our considerations in this part is the following result

\begin{thm}[Agrachew-Sarychew Index Lemma \cite{Agrachev_Sarychev_1996}]\label{thm:as_lemma}
    Let $q(t)$, with $t\in[0,T]$ be an abnormal minimizing sub-Riemannian geodesics corresponding to the control $u\in L^2([0,T],\R^l)$, and denote by $r$ the codimension of $\image \D_u\End^T$ in $\T_{q(T)}M$. Then there exists a covector $\phi_0\in \T^\ast_{q(T)}M$ with the following properties 
    \begin{itemize}
        \item $\phi_0$ annihilates the image $\image \D_u\END^T$
        \item the negative index of the quadratic map 
        $$\<\phi_0,\D^2_u\END^T>\big|_{\ker \D_u\END^T}:\ker \D_u\END^T\lra \R$$
        is at most $r-1$. 
    \end{itemize}
 Above we treat a covector $\phi_0\in \T^\ast M$, as an element  $\phi_0+0\cdot\dd r\in  \T^\ast(M\times \R)$ -- hence its action on $\T \R$-vectors is trivial. 
\end{thm}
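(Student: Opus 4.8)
The plan is to derive the Index Lemma from two ingredients: the variational fact, stated above, that a minimizer makes $\END^T$ non-open at $u$, together with a purely algebraic \emph{second-order open mapping criterion} for maps whose first differential has corank $r$, phrased through the negative index of the associated Hessian forms.

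First I would pass to the cokernel of the first differential. Write $V:=\image\D_u\End^T\subset\T_{q(T)}M$; abnormality gives $\operatorname{codim}V=r\geq 1$. The decisive case is when the energy differential $\D_u E^T$ does not vanish on $\ker\D_u\End^T$: then $\operatorname{codim}\image\D_u\END^T=r$ in $\T_{q(T)}M\times\R$ and, crucially, every covector annihilating $\image\D_u\END^T$ must kill the $\T\R$-direction (if $(\psi,c)$ annihilates the image and some $\du_0\in\ker\D_u\End^T$ has $\D_u E^T[\du_0]\neq 0$, then $c\,\D_u E^T[\du_0]=0$ forces $c=0$). This is exactly the mechanism that makes the multiplier $\phi_0$ act trivially on the energy leg, as the statement requires. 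Let $\Lambda:=(\image\D_u\END^T)^\perp$, an $r$-dimensional space of such covectors, and let $\pi$ denote the projection onto the quotient $\T(M\times\R)/\image\D_u\END^T\cong\R^r$. Since $\pi\circ\D_u\END^T=0$, the leading term of $\pi\circ\END^T$ along $\ker\D_u\END^T$ is the quadratic map $Q:=\tfrac12\,\pi\circ\D^2_u\END^T\big|_{\ker\D_u\END^T}:\ker\D_u\END^T\to\R^r$, whose components in a basis of $\Lambda$ are precisely the Hessian forms $\<\phi,\D^2_u\END^T>\big|_{\ker\D_u\END^T}$. By the second-order open mapping theorem, openness of $Q$ at the origin would force openness of $\END^T$ at $u$; contrapositively, non-openness of $\END^T$ yields non-openness of $Q$.

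It then remains to invoke the quadratic open mapping criterion: a quadratic map $Q:H\to\R^r$ on a Hilbert space with $\bm 0\notin\operatorname{int}\image Q$ must admit some $\phi\in\R^r\setminus\{\bm 0\}$ with $\operatorname{ind}^-\<\phi,Q>\leq r-1$. Reading this through $\pi$, the resulting $\phi_0\in\Lambda$ annihilates $\image\D_u\END^T$ and satisfies $\operatorname{ind}^-\<\phi_0,\D^2_u\END^T>\big|_{\ker\D_u\END^T}\leq r-1$, which is the assertion. To prove the criterion I would argue by contraposition: assuming $\operatorname{ind}^-\<\phi,Q>\geq r$ for every $\phi$ on the unit sphere $S^{r-1}$, pick for each such $\phi$ an $r$-dimensional subspace on which $\<\phi,Q>$ is negative definite, note that this persists for nearby covectors, cover the compact sphere $S^{r-1}$ by finitely many such neighborhoods, and let $L\subset H$ be the (finite-dimensional) span of the corresponding subspaces; then $\<\phi,Q>\big|_L$ still has negative index $\geq r$ for all $\phi$, so openness of $Q$ reduces to openness of the finite-dimensional model $Q|_L:L\to\R^r$. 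The latter is a topological statement proved by a degree argument: the index condition guarantees that a suitable sphere-level map $S^{\dim L-1}\to S^{r-1}$ built from $Q|_L$ has nonzero Brouwer degree, forcing $\bm 0$ into the interior of $\image Q|_L$.

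I expect the main obstacle to lie in the quadratic open mapping criterion itself — both in the finite-dimensional reduction, where one must uniformly control the negative index across the whole compact sphere of covectors although the negative-definite subspaces vary with $\phi$, and in the accompanying degree computation. A secondary, bookkeeping obstacle is the exceptional case $\D_u E^T\big|_{\ker\D_u\End^T}=0$, where $\image\D_u\END^T$ has codimension $r+1$: there one must still extract a zero-energy multiplier with negative index $\leq r-1$ rather than the naive $\leq r$, using that minimality genuinely constrains the energy direction and is not merely the failure of openness.
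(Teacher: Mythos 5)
This theorem is not proved in the paper at all: it is imported verbatim from \cite{Agrachev_Sarychev_1996}, and the text only proves the easy corollary (Proposition~\ref{lem:as_plus}) that converts it from $\END^T$ to $\End^T$. So there is no in-paper argument to compare against; your proposal has to be judged as a reconstruction of the Agrachev--Sarychev proof itself. As an architecture it is faithful to the known one: non-openness of $\END^T$ at a minimizer (the Fact in the paper), reduction modulo $\image\D_u\END^T$ to a quadratic map $Q:\ker\D_u\END^T\to\R^r$, and a quadratic open-mapping criterion producing a multiplier of small negative index. Your observation that, when $\D_uE^T$ does not vanish on $\ker\D_u\End^T$, every annihilator of $\image\D_u\END^T$ automatically has zero energy component is correct and is exactly how the ``$\phi_0+0\cdot\dd r$'' normalization in the statement arises.

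However, the two places where the theorem actually lives are left open. First, the quadratic criterion: the passage from ``$\operatorname{ind}^-\langle\phi,Q\rangle\geq r$ for all $\phi\neq 0$'' to openness of $\END^T$ is not simply ``openness of $Q$ implies openness of the map''; one needs $Q$ to possess a \emph{regular} zero on $\ker\D_u\END^T$ and then an implicit-function/contraction argument to promote this to local openness of the full nonlinear map --- this is the content of Agrachev's generalized Morse theorem and is nontrivial. Moreover your topological step does not typecheck as stated: a map $S^{\dim L-1}\to S^{r-1}$ has no Brouwer degree when $\dim L>r$, so ``nonzero degree'' must be replaced by the actual argument (a homotopy/characteristic-class argument on the family of negative subspaces of $\langle\phi,Q\rangle$ over the sphere of covectors). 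The finite-dimensional reduction by compactness of $S^{r-1}$ is fine. Second, the exceptional case $\D_uE^T\big|_{\ker\D_u\End^T}=0$: there $\operatorname{codim}\image\D_u\END^T=r+1$, the criterion only yields index $\leq r$, and the resulting multiplier need not kill the $\T\R$-direction; you correctly flag this but do not close it, and it cannot be closed from non-openness alone --- minimality must be used in an essentially stronger way (this is where the sign normalization of the energy multiplier and the drop from $r$ to $r-1$ come from in the original paper). Until those two steps are supplied, the proposal is an outline rather than a proof.
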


The above result is stated in terms of the extended end-point map $\END^T$. It is however easy to reformulate it in the language of the standard end-point map $\End^T$ for a moderate price of rising the index by one. 

\begin{prop}[modification of the Agrachew-Sarychew Index Lemma]\label{lem:as_plus}
    Let $q(t)$, with $t\in[0,T]$, be an abnormal minimizing sub-Riemannian geodesics corresponding to the control $u\in L^2([0,T],\R^l)$, and denote by $r$ the codimension of $\image \D_u\End^T$ in $\T_{q(T)}M$. Then there exists a covector $\phi_0\in \T^\ast_{q(T)}M$ with the following properties
    \begin{itemize}
        \item $\phi_0$ annihilates the image $\image \D_u\End^T$
        \item the negative index of the quadratic map 
        $$\<\phi_0,\D^2_u\End^T>\big|_{\ker \D_u\End^T}:\ker \D_u\End^T\lra \R$$
        is at most $r$. 
    \end{itemize}
\end{prop}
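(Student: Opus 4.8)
The plan is to deduce Proposition~\ref{lem:as_plus} directly from Theorem~\ref{thm:as_lemma} by relating the extended end-point map $\END^T=(\End^T,E^T)$ to the ordinary end-point map $\End^T$, at the cost of a one-dimensional enlargement of the target that forces the index bound to grow by one. The central observation is that $\END^T$ and $\End^T$ share the same domain $\U=L^2([0,T],\R^l)$, and that $\End^T$ is simply the composition of $\END^T$ with the projection $\pi:M\times\R\ra M$. Consequently $\D_u\End^T=\T\pi\circ\D_u\END^T$, so the kernel of $\D_u\END^T$ is contained in the kernel of $\D_u\End^T$, and the image $\image\D_u\End^T=\T\pi(\image\D_u\END^T)$.

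First I would compare the two images and their codimensions. Writing $r$ for the codimension of $\image\D_u\End^T$ in $\T_{q(T)}M$ (the quantity in the Proposition), I would show that the codimension $\wt r$ of $\image\D_u\END^T$ in $\T_{(q(T),E^T(u))}(M\times\R)$ satisfies $\wt r\in\{r,r+1\}$. Indeed, applying $\T\pi$ to $\image\D_u\END^T$ lands in $\image\D_u\End^T$, which has codimension $r$ in $\T_{q(T)}M$; the kernel of $\T\pi$ restricted to $\image\D_u\END^T$ is at most one-dimensional (it is the intersection of the image with the vertical line $\{0\}\times\R$). A dimension count then gives $\wt r\le r+1$. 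The prescribed covector $\phi_0\in\T^\ast_{q(T)}M$ of the Proposition is obtained from the Agrachew--Sarychew covector by taking exactly the $M$-component; since the Agrachew--Sarychew covector annihilates $\image\D_u\END^T$ and acts trivially on $\T\R$, its $M$-part annihilates $\T\pi(\image\D_u\END^T)=\image\D_u\End^T$, giving the first bullet.

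Next I would compare the two quadratic forms. Because the covector $\phi_0$ is taken to act trivially on the $\R$-factor, for any $\du$ we have $\<\phi_0,\D^2_u\END^T[\du]>=\<\phi_0,\D^2_u\End^T[\du]>$, i.e. the energy component contributes nothing. Hence the quadratic form $\<\phi_0,\D^2_u\End^T>$ appearing in the Proposition is the restriction to $\ker\D_u\End^T$ of precisely the same functional whose restriction to $\ker\D_u\END^T$ is controlled by Theorem~\ref{thm:as_lemma}. Since $\ker\D_u\END^T\subset\ker\D_u\End^T$, the larger kernel differs from the smaller by at most one dimension (it is cut out by the single extra equation $\D_uE^T[\du]=0$). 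A standard linear-algebra fact about restrictions of quadratic forms to subspaces of codimension one—the negative index can increase by at most one when passing to a superspace of codimension one—then bounds the negative index of $\<\phi_0,\D^2_u\End^T>\big|_{\ker\D_u\End^T}$ by $(r-1)+1=r$, giving the second bullet.

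The step I expect to be the main obstacle is keeping the bookkeeping of the two different codimensions $r$ and $\wt r$ coherent: Theorem~\ref{thm:as_lemma} bounds the index by $\wt r-1$ in terms of the \emph{extended} codimension, while the Proposition wants the bound $r$ in terms of the \emph{ordinary} codimension, and one must verify that the two independent $+1$'s (one from $\wt r\le r+1$, one from enlarging the kernel) combine to exactly $r$ rather than $r+1$. I would handle this by treating the two boundary cases $\wt r=r$ and $\wt r=r+1$ separately and checking that in the case $\wt r=r+1$ the extra vertical direction in the image forces the kernels $\ker\D_u\END^T$ and $\ker\D_u\End^T$ to actually coincide (so no index increase from enlarging the kernel occurs), while in the case $\wt r=r$ the kernels may differ by one dimension but the index bound from Theorem~\ref{thm:as_lemma} is already $r-1$, leaving room for the $+1$. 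A clean way to organize this is to note that $\dim\ker\D_u\End^T-\dim\ker\D_u\END^T=1-(\wt r-r)$, which ties the two quantities together and yields the uniform bound $r$ in both cases.
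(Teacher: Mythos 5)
Your argument is correct and its core is exactly the paper's: take the Agrachew--Sarychew covector $\phi_0$ (which by the statement of Theorem~\ref{thm:as_lemma} already lives in $\T^\ast_{q(T)}M$ and acts trivially on the $\T\R$-factor, so $\<\phi_0,\D^2_u\END^T>=\<\phi_0,\D^2_u\End^T>$ and $\phi_0$ annihilates $\image\D_u\End^T$), observe that $\ker\D_u\END^T=\ker\D_u\End^T\cap\ker\D_uE^T$ has codimension at most one in $\ker\D_u\End^T$, and use the fact that the negative index grows by at most one under a codimension-one enlargement of the domain. The one substantive deviation is a misreading of Theorem~\ref{thm:as_lemma}: there $r$ is defined, exactly as in the Proposition, as the codimension of the \emph{ordinary} image $\image\D_u\End^T$ in $\T_{q(T)}M$, so the theorem already gives the bound $r-1$ in the ordinary codimension and the entire $\wt{r}$ bookkeeping with its two-case analysis is unnecessary --- the direct estimate $(r-1)+1=r$ finishes the proof, which is what the paper does. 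Your extra care is not wrong, and would genuinely be needed if the Index Lemma were stated with the extended codimension $\wt{r}$ (one must then check that the two potential $+1$'s cannot accumulate); your identity $\dim\ker\D_u\End^T-\dim\ker\D_u\END^T=1-(\wt{r}-r)$ handles this correctly. Only note that in the case $\wt{r}=r+1$ the kernels coincide because the vertical line $\{0\}\times\R$ is \emph{not} contained in $\image\D_u\END^T$, rather than because of an ``extra vertical direction in the image'' as you wrote --- a wording slip that does not affect the logic.
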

\begin{proof} 
    Let $\phi_0$ be as in the assertion of Theorem~\ref{thm:as_lemma}. Since $\phi_0$ annihilates the $\T\R$-direction, then clearly 
    $$\<\phi_0,\D^2_u\END^T>=\<\phi_0,\D^2_u\End^T>\ .$$
    Now as $\ker \D_u\END^T=\ker\D_u\End^T\cap \ker\D_u E^T$ and $E^T$ is an $\R$-valued map, the space $\ker \D_u\END^T$ is a subspace of $\ker \D_u\End^T$ of codimension less or equal 1. Thus if $\<\phi_0,\D^2_u\End^T>$ would be negatively defined on some $r+1$ dimensional vector subspace $W\subset \ker \D_u\End^T$, then $W'=W\cap \ker \D_u E^t\subset \ker \D_u\END^T$ would be a vector subspace of dimension at least $r$ on which  $\<\phi_0,\D^2_u\End^T>=\<\phi_0,\D^2_u\END^T>$ is negatively defined. This contradicts the assertion of Theorem~\ref{thm:as_lemma}.
\end{proof}

Under a further assumption that the index mentioned in Prop.~\ref{lem:as_plus} is actually zero, results of Theorem~\ref{thm:adapted_q} allow to derive the following \emph{Goh conditions}.

\begin{lem}\label{lem:results_deg_2}
Let $q(t)$ be a sub-Riemannian trajectory  corresponding to a control $u\in L^2([0,T],\R^l)$. Assume that a covector $\phi_0\in \T^\ast_{q(T)}M$ satisfies the conditions 
\begin{enumerate}[(i)]
    \item $\phi_0$ annihilates the image $\image \D_u\End^T$
    \item the negative index of the quadratic map $$\<\phi_0,\D^2_u\End^T>\big|_{\ker \D_u\End^T}:\ker \D_u\End^T\lra \R$$
        is zero. 
\end{enumerate}
Then there exists an $n$-tuple $(\phi^1,\hdots,\phi^n)\in \R^n\setminus\{\bm 0\}$ such that the Pontryagin covector $\phi(t)=\sum_a \phi^a\lin{1}_a(t)\in \T^\ast_{q(t)}M$ satisfies the conditions 
$$\<\phi(t),X_i(q(t))>=0\quad\text{and}\quad\<\phi(t),[X_i,X_j](q(t))>=0\qquad \text{for all $i,j=1,2,\hdots,l$ and all $t\in[0,T]$.} $$
\end{lem}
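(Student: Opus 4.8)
The plan is to recast the index-zero hypothesis as a non-controllability statement for the degree-two affine system $(\Sigma^{(2)})$ of Theorem~\ref{thm:adapted_q} and to extract the Goh brackets from the Lie brackets of its control fields $Y^{(2)}_i$, exactly as Lemma~\ref{lem:results_deg_1} does in degree one. First I would settle the degree-one condition and fix notation. As $\phi_0\neq 0$ annihilates $\image\D_u\End^T$, the trajectory is abnormal; since $\{\lin{1}_a(T)\}_{a}$ is a basis of $\T^\ast_{q(T)}M$ (proof of Theorem~\ref{thm:adapted}), write $\phi_0=\sum_a\phi^a\lin{1}_a(T)$ and set $\phi(t)=\sum_a\phi^a\lin{1}_a(t)$. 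Then $\langle\phi(t),X_i(q(t))\rangle=0$ is precisely Lemma~\ref{lem:results_deg_1}, so only the bracket conditions remain.

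Next I would translate the second-order hypothesis into adapted coordinates. Using $\q{1}(T,\du)=\lin{1}(T)[\bb{1}(T,\du)]$ and, on the kernel $\{\q{1}(T,\du)=0\}$, $\q{2}(T,\du)=\lin{1}(T)[\bb{2}(T,\du)]$, the vanishing of the negative index of $\langle\phi_0,\D^2_u\End^T\rangle$ on $\ker\D_u\End^T$ becomes positive semidefiniteness of the quadratic form $\du\mapsto\langle\phi,\q{2}(T,\du)\rangle$ there. Equivalently, inside the slice $\{\q{1}=0\}$ the reachable set of $(\Sigma^{(2)})$ from the origin is contained in the half-space $\{\langle\phi,\q{2}\rangle\geq 0\}$, so the origin is not interior to it. Because the fields $Y^{(2)}_i$ are homogeneous of degree $-1$, this reachable set is a graded cone, and, restricting controls to $[0,\tau]$ exactly as in Lemma~\ref{lem:results_deg_1}, the same holds at every time. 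This is the non-controllability input for the Sussmann--Jurdjevic machinery of \cite{Sussmann_Jurdjevic_1972} packaged in Lemma~\ref{lem:parts}.

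The computational core is the bracket $[Y^{(2)}_i,Y^{(2)}_j]$. From the explicit form of $Y^{(2)}_i$ its $\q{1}$-component $\lin{1}(t)[X_i]$ depends on $t$ only, while its $\q{2}$-component $\lin{1}_a[\D X_i[A\q{1}]]+\lin{2}_a[X_i,A\q{1}]$ is linear in $\q{1}$; hence the bracket is purely vertical in $\q{2}$. Differentiating the $\q{2}$-component of $Y^{(2)}_j$ along the $\q{1}$-part of $Y^{(2)}_i$ and using $A=(\lin{1})^{-1}$, so that $A$ applied to the coordinate vector $\lin{1}[X_i]$ returns $X_i$, the $\lin{2}$-terms cancel by symmetry of $\lin{2}_a$ and the $\lin{1}$-terms combine to $\lin{1}_a[\D X_j[X_i]-\D X_i[X_j]]=\lin{1}_a[[X_i,X_j]]$. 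Thus $[Y^{(2)}_i,Y^{(2)}_j]=\sum_a\lin{1}_a(t)[[X_i,X_j](q(t))]\,\pa_{\q{2}_a}$, and pairing with any covector whose $\q{2}$-slot is $\phi$ returns exactly $\langle\phi(t),[X_i,X_j](q(t))\rangle$.

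I would then conclude contrapositively: if $\langle\phi(t_0),[X_i,X_j](q(t_0))\rangle\neq 0$ for some $t_0,i,j$, the bracket above is a $\q{2}$-direction on which $\phi$ does not vanish; feeding it and its reverse into the Sussmann--Jurdjevic criterion produces slice-reachable points with $\langle\phi,\q{2}\rangle$ of both signs arbitrarily near the origin, contradicting the semidefiniteness of the previous step. The main obstacle is exactly that translation step: converting \emph{index zero on the kernel} into a controllability statement to which Sussmann--Jurdjevic applies, and in which the obstructing covector is forced to be $\phi$ in the $\q{2}$-slot. Since abnormality already degenerates the $\q{1}$-direction, the genuinely second-order obstruction living in the $\q{2}$-fiber must be separated from this first-order non-accessibility; the graded structure of $(\Sigma^{(2)})$ together with the purely vertical nature of $[Y^{(2)}_i,Y^{(2)}_j]$ is what makes the separation possible, and this is presumably the technical content of Lemma~\ref{lem:parts}.
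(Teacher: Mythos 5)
Your skeleton (adapted coordinates $\to$ non-controllability $\to$ Sussmann--Jurdjevic $\to$ the bracket identity $[Y^{(2)}_i,Y^{(2)}_j]=\sum_a\lin{1}_a(t)\left[[X_i,X_j](q(t))\right]\partial_{\q{2}_a}$) matches the paper's, and your bracket computation is correct. The genuine gap is exactly the step you flag as ``the main obstacle'' and then defer: converting \emph{index zero on the kernel} into a statement to which the Sussmann--Jurdjevic criterion applies. The hypothesis only controls the intersection of the reachable set of $(\Sigma^{(2)})$ with the slice $\{\q{1}=0\}$; a slice of a reachable set is not itself the reachable set of a control system, so non-controllability of $(\Sigma^{(2)})$ does not follow, and the cited criteria say nothing about slices. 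Your guess that Lemma~\ref{lem:parts} supplies this translation is wrong: that lemma cuts the trajectory into finitely many pieces so that the Agrachev--Sarychev index drops to zero, i.e.\ it produces the \emph{hypothesis} of Lemma~\ref{lem:results_deg_2}, not a controllability statement. The paper closes the gap differently: it doubles the system, introducing a second independent control $\du_2$ and the variables $x^{(1)}(t)=\q{1}(t,\du_1)$, $x^{(2)}(t)=\q{2}(t,\du_1)+\q{1}(t,\du_2)$, governed by $\dot x^{(1)}=\sum_i\du_{1i}Y^{(1)}_i$ and $\dot x^{(2)}=\sum_i\du_{1i}Y^{(2)}_i+\sum_i\du_{2i}Y^{(1)}_i$. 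Hypothesis (i) kills $\langle\phi_0',\q{1}(T,\du_2)\rangle$ and hypothesis (ii) gives $\langle\phi_0',x^{(2)}(T)\rangle\ge 0$ whenever $x^{(1)}(T)=0$, so no neighborhood of the origin in $V\times\R^n$ is reachable; this is genuine local non-controllability of an affine system at a point, and only then does Sussmann--Jurdjevic yield the proper subspace $W=V\times V'$ containing all control fields and their brackets.

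A second, smaller discrepancy: you force the concluding covector to be $\phi_0$ itself (writing $\phi_0=\sum_a\phi^a\lin{1}_a(T)$ at the outset). The lemma only asserts the existence of \emph{some} nonzero $n$-tuple, and the paper's proof delivers only that: an arbitrary covector annihilating $V'$, with no claim that $\phi_0'$ annihilates $V'$. Your stronger version is the classical form of the Goh condition and may well be true, but your argument for it rests entirely on the unproven slice-controllability step, so as written it does not go through. Note also that the extra control fields $(0,Y^{(1)}_i)$ of the doubled system are precisely what place $\lin{1}(t)\left[X_i(q(t))\right]$ inside $V'$, so the first Goh identity for the \emph{new} covector comes out of the same argument rather than from Lemma~\ref{lem:results_deg_1} applied to $\phi_0$.
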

\begin{proof}
By assumption, for every controls $\ddu{1},\ddu{2}\in L^2([0,T],\R^l)$ such that $\D_u\End^T\left[\ddu{1}\right]=\bb{1}(T,\ddu{1})=0$ we have
$$\<\phi_0,  \frac 12\D^2_u\End^T\left[\ddu{1}\right]=\bb{2}(T,\ddu{1})>\geq 0\quad\text{and}\qquad\<\phi_0,\D_u\End^T\left[\ddu{2}\right]=\bb{1}(T,\ddu{2})>= 0\ .$$
Now observe that, since $\bb{1}(T,\ddu{1})=0$, when passing to adapted coordinates formulas \eqref{eqn:q_1} and \eqref{eqn:q_2} give us
$$\q{2}(T,\du_1)=\lin{1}(T)\left[\bb{2}(T,\ddu{1})\right]\quad\text{and}\quad \q{1}(T,\ddu{1})=\lin{1}(T)\left[\bb{1}(T,\ddu{2})\right]\ .$$
Hence for a non-zero covector $\phi_0'=\lin{1}(T)^\ast \phi_0\in \R^n$, we have
$\<\phi_0', \q{2}(T,\ddu{1})>\geq 0$ and $\<\phi_0',\q{1}(T,\ddu{2})>= 0$, and thus
$$\<\phi_0', \q{2}(T,\ddu{1})+\q{1}(T,\ddu{2})>\geq 0\qquad\text{whenever $\q{1}(T,\ddu{1})=\lin{1}(T)\left[\bb{1}(T,\ddu{1})\right]=0$}.  $$
In light of Theorem~\ref{thm:adapted_q} this translates as a condition that the control system
\begin{equation}\label{eqn:cs_deg2}
\begin{split}
    \dot{x}^{(1)}&=\sum_i\ddu{1}_i(t)\cdot Y^{(1)}_i(t) \\
    \dot{x}^{(2)}&=\sum_i\ddu{1}_i(t)\cdot   Y^{(2)}_i(t,x^{(1)}) +\sum_i\ddu{2}_i(t) \cdot Y^{(1)}_i(t)\ ;
\end{split}
\end{equation}
i.e. ${x}^{(1)}(t)=\q{1}(t,\ddu{1})$ and ${x}^{(2)}(t)=\q{2}(t,\ddu{1})+\q{1}(t,\ddu{2})$; is not controllable (as a system in $\R^n\times\R^n$, but also in  $V\times \R^n$, where $V:=\{\q{1}(T,\du)\that \du\in L^2([0,T],\R^l)\}$ is the reachable set of coordinates $x^{(1)}$ -- as in the proof of Lemma~\ref{lem:results_deg_1}) at time $T$ at the point $(x^{(1)}=0,x^{(2)}=0)$. In particular, by the criteria of local controllability of Jurdjevic-Sussmann \cite{Sussmann_Jurdjevic_1972}, the control fields 
$$\Z{2}_i(t,x^{(1)}):=(Y^{(1)}_i(t),Y^{(2)}_i(t,x^{(1)}))\quad\text{and}\quad \Z{1}_i:=(0,Y^{(1)}_i(t))$$ and their Lie brackets calculated at $(T, x^{(1)}=0,x^{(2)}=0 )$ are contained in a  proper linear subspace $W\subset V\times\R^n$. Note also, that the reachable set of the discussed control system at any time $T'\in[0,T]$ at the point $(x^{(1)}=0,x^{(2)}=0)$ is naturally contained in  the analogous reachable set at $T$ (by simply extending the controls by zero on $[T',T]$). Thus we may assume that $W$ is spanned by  the control vector fields and their brackets calculated at  $(T', x^{(1)}=0,x^{(2)}=0 )$ for all $T'\in[0,T]$. Further, as the system $\dot x^{(1)}=\sum_i \ddu{1}_i\cdot Y^{(1)}_i(t)$ is controllable in $V\subset\R^n$, we may assume that $W=V\times V'$, where $V'\subset \R^n$ is a proper vector subspace. \smallskip

Finally, a simple calculation shows that at time $t$
$$[\Z{2}_i,\Z{2}_j]= (0,\lin{1}(t)\Big[[X_i,X_j](q(t))\Big])\ ,$$
hence, in particular, for every $t\in[0,T]$ fields $(0,\lin{1}(t)\big[[X_i,X_j](q(t))\big])$ and $(0,Y^{(1)}_i(t))=(0,\lin{1}(t)\big[X_i(q(t))\big])$ belong to $V\times V'$. Taking $(\phi^1,\hdots,\phi^n)\in\R^n\setminus\{\bm 0\}$ to be any covector annihilating $V'$ we get the assertion.
\end{proof}

 Finally, we may refer to the recent results of \cite{MJ_SR_deg2} where it is proved that it is possible to put the Agrachew-Sarychew index to zero by dividing the curve into a finite number of pieces.

\begin{lem}[\cite{MJ_SR_deg2}]\label{lem:parts}
 Let $q(t)$, with $t\in[0,T]$ be an abnormal minimizing sub-Riemannian geodesics corresponding to the control $u(t)$, and denote by $r$ the codimension of $\image \D_u\End^T$ in $\T_{q(T)}M$. Then there exists at most $r$ points $0\leq\tau_1<\tau_2<\hdots< \tau_s\leq T$ such that on every subinterval $[a,b]\subset (\tau_i,\tau_{i+1})$ for $i=1,2,\hdots, s-1$ the trajectory $q(t)$ with $t\in[a,b]$ satisfies the assumptions of Lemma~\ref{lem:results_deg_2}.\smallskip 

In particular, on each of the pieces the assertion of Lemma~\ref{lem:results_deg_2}  holds. Thus we get \emph{Goh conditions} on each piece (perhaps for different Pontryagin covectors). 
\end{lem}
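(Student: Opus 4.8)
The plan is to track how the negative index of the constrained second variation grows as the integration interval expands, and to read off the cut points $\tau_i$ as the instants at which this index jumps. First I would recall from Proposition~\ref{lem:as_plus} that along the whole interval there is a Pontryagin covector $\phi$, with $\phi(T)=\phi_0$, whose quadratic form $\langle\phi(t),\D^2_u\End^t\rangle\big|_{\ker\D_u\End^t}$ has negative index at most $r$. For a subinterval $[a,b]$ I would consider the analogous restricted form built from the control system on $[a,b]$ with initial point $q(a)$, and define $\mathrm{ind}(a,b)$ to be its negative index minimized over covectors in $\T^\ast_{q(b)}M$ annihilating $\image\D_u\End^{[a,b]}$. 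By the proof of Lemma~\ref{lem:results_deg_2}, the hypotheses (i)--(ii) for the piece $[a,b]$ are exactly the statement $\mathrm{ind}(a,b)=0$, so it suffices to exhibit at most $r$ points such that $\mathrm{ind}(a,b)=0$ on every $[a,b]$ strictly between consecutive ones. I would observe at this stage that condition (i) is never an obstruction: for every interior $b$ the restriction $\phi(b)$ already annihilates $\image\D_u\End^{[a,b]}$, because abnormality gives the pointwise identity $\langle\phi(t),X_i(q(t))\rangle=0$ of Lemma~\ref{lem:results_deg_1}, whence $\langle\phi(b),\bb{1}\rangle=\int_a^b\langle\phi(t),\sum_i\du_iX_i(q(t))\rangle\,\dd t=0$. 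Thus an admissible nonzero covector is available on every piece, and only the vanishing of the index has to be arranged.

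Next I would establish two structural properties of $\mathrm{ind}(\cdot,\cdot)$. The first is \emph{monotonicity}: if $[a',b']\subseteq[a,b]$ then $\mathrm{ind}(a',b')\le\mathrm{ind}(a,b)$, which follows by extending a control variation on $[a',b']$ by zero and transporting the annihilating covector along the dual flow \eqref{eqn:ew_Phi_1}, so that a negative subspace for the smaller interval injects into one for the larger. The second is \emph{localization of the increments}: in the Jacobi-curve picture the jumps of the index are concentrated at the finitely many \emph{conjugate instants}, where the relevant Lagrangian crossing occurs, and the sum of the jumps equals the total index, which is at most $r$ by Proposition~\ref{lem:as_plus}.

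I would then take $\tau_1<\dots<\tau_s$ to be precisely these conjugate instants; since each contributes at least one to the index and the total is at most $r$, we get $s\le r$. On each open gap $(\tau_i,\tau_{i+1})$ there are no conjugate instants, so for every $[a,b]\subset(\tau_i,\tau_{i+1})$ the crossing count on $[a,b]$ is zero and hence $\mathrm{ind}(a,b)=0$; together with the admissible covector from the first paragraph this gives the hypotheses of Lemma~\ref{lem:results_deg_2}, and therefore the Goh conditions on the piece with the corresponding, possibly piece-dependent, Pontryagin covector. An alternative route, closer to the spirit of this paper, is to phrase $\mathrm{ind}(a,b)=0$ as a one-sidedness (non-controllability) statement for the degree-two affine system $(\Sigma^{(2)})$ of Theorem~\ref{thm:adapted_q} restricted to $[a,b]$, and to watch how the second-order reachable set acquires negative directions only as the interval is grown across a $\tau_i$.

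The hard part will be the localization statement: proving that the index genuinely increases only at isolated conjugate instants and that their number is bounded by $r$ rather than by some larger quantity. This requires controlling the dependence of both $\ker\D_u\End^{[a,b]}$ and the admissible covector on the moving basepoint $q(a)$ -- a difficulty special to abnormal extremals, where the Jacobi curve may fail to be monotone -- and here the finiteness of the total index coming from Theorem~\ref{thm:as_lemma} and Proposition~\ref{lem:as_plus} is exactly the input that keeps the crossing analysis under control. A routine but necessary final check is that no index is hidden at the endpoints $0$ and $T$, so that restricting attention to the interior gaps $(\tau_i,\tau_{i+1})$ indeed suffices.
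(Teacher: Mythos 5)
A preliminary remark: the paper itself does not prove Lemma~\ref{lem:parts} --- it is imported from \cite{MJ_SR_deg2} --- so your attempt can only be measured against the argument one would expect, not against a proof in this text. Your setup is sound as far as it goes: the monotonicity of the (covector-minimized) negative index under shrinking of the interval, and the observation that Lemma~\ref{lem:results_deg_1} supplies a nonzero admissible covector on every piece so that only condition (ii) of Lemma~\ref{lem:results_deg_2} is at stake, are both correct and would appear in any proof.

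The gap is that the entire content of the lemma sits in your ``localization of the increments'' step, and that step is asserted rather than proved. For an abnormal extremal there is no off-the-shelf Jacobi-curve or crossing-form theory guaranteeing that the index jumps only at isolated ``conjugate instants'', that each such instant contributes at least one to the total index, or even that such instants cannot accumulate; you acknowledge exactly this, which means the proof is not closed. There is also a mismatch between your $\mathrm{ind}(a,b)$, a minimum over admissible covectors that may be attained at different covectors for different subintervals, and the conjugate-point picture, which presupposes a single fixed extremal lift. The expected, and more elementary, route avoids conjugate points entirely: (a) if $[a_1,b_1],\dots,[a_m,b_m]$ are pairwise disjoint subintervals on each of which \emph{every} admissible covector has positive index, then for the single covector $\phi_0$ of Proposition~\ref{lem:as_plus}, transported along the adjoint flow \eqref{eqn:ew_Phi_1}, one obtains on each $[a_j,b_j]$ a kernel variation supported there with negative second variation; since the cross terms of the Hessian between kernel variations supported on disjoint intervals vanish, these span an $m$-dimensional negative subspace of $\ker \D_u\End^T$, whence $m\le r$ by Proposition~\ref{lem:as_plus}; (b) the family of ``bad'' closed subintervals is upward closed under inclusion, so the classical piercing duality for intervals (the minimum number of points meeting every member of a family of closed intervals equals the maximum number of pairwise disjoint members) yields at most $r$ points $\tau_1<\dots<\tau_s$ meeting every bad interval, and every $[a,b]$ contained in a gap $(\tau_i,\tau_{i+1})$ is then good, i.e.\ satisfies the hypotheses of Lemma~\ref{lem:results_deg_2}. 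You should either supply such a combinatorial argument or a genuine proof of the conjugate-instant localization; as written, the proposal restates the lemma as an unproven claim about conjugate points.
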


Actually, the tool of adapted coordinates can be used to give a more refined picture of the second-order optimality conditions in sub-Riemannian geometry. We refer to \cite{MJ_SR_deg2} for more details.

\section{General polynomial variations of the end-point map}

\paragraph{Motivations}
Curves $\bb{k}(t,\du)$ (and their counterparts $\q{k}(t,\du)$ in adapted coordinates) are obtained by studying reactions of the end-point map $\End^t$ to changes of the control argument of the form $s\mapsto u+s\cdot\du$. It is, however, most natural to study such reactions for a more general family of controls, i.e.
\begin{equation}
\label{eqn:variation_control}
s\longmapsto u+s\cdot \ddu{1}+s^2\cdot \ddu{2}+\hdots+s^k\cdot \ddu{k}\ .
\end{equation}
Geometrically this corresponds to studying the natural lift of the end-point map to the space of $k$-jets, i.e. $\T^k\End^t:\T^k_u\Omega \lra \T^k_{q(t)} M$.

Let us observe that such a generalization looks promising from the point of view of applications. First of all, recall formula \eqref{eqn:expansion_End_2_degree}, which allowed us to understand Agrachev's approach to the study of second derivatives. Further note that in our interpretation of sub-Riemannian optimality conditions of degree two in Lemma~\ref{lem:results_deg_2}, it turned out that the non-controllability of the system \eqref{eqn:cs_deg2} played a crucial role. This system is clearly not constructed from a variation of the end-point map related with the control shift $s\mapsto u+s\cdot\du$, as it involves two controls $\ddu{1},\ddu{2}$ instead of a single control $\du$. However, it is not difficult to guess that it corresponds to variation by the family of controls
$s\mapsto u+s\cdot \ddu{1}+s^2\cdot \ddu{2}$ undergoing a transformation of adapted coordinates. 

\paragraph{Polynomial variations of the end-point map}
Family of controls \eqref{eqn:variation_control} is encoded by a $k$-tuple $\vv{\du}:=(\ddu{1},\ddu{2},\hdots,\ddu{k})\in \U^k$, establishing a canonical isomorphism between the fibre of the $k$-jet bundle $\T^k_u\U$ and $\U^k$ (true for every vector space). To simplify the notation, let us denote
$$s\circ\vv{\du}:=s\cdot\ddu{1}+s^2\ddu{2}+\hdots+s^k\cdot\ddu{k}\ .$$
By a \emph{$k$-variation}  of $\End^t$ at $u$ in the direction of $\vv{\du}$  we shall understand the $k$-jet of $s\mapsto \End^t[u+s\circ\vv{\du}]$. Analogously to our previous considerations, in a local coordinate system it can be encoded by a family of curves $\bb{i}(t,\vv{\du})$, for $i=1,2,\hdots,k$,
\begin{align*}
&\End^t[u+s\circ\vv{ \du}]\overset{loc}=q(t)+s\cdot \bb{1}(t,\vv{\du})+s^2\cdot \bb{2}(t,\vv{\du})+\hdots+s^k\cdot \bb{k}(t,\vv{\du})+o(s^k)\ ,
\end{align*}
where $\bb{m}(t,\vv{\du})\overset{loc}=\frac 1{m!}\cdot \frac{\pa^m}{\pa s^m}\End^t[u+s\circ\vv{\du}]$. 
\smallskip

Let us remark that using Lemma~\ref{lem:composition} it is possible to express curves $\bb{m}(t,\vv{\du})$ in terms of the derivatives $\D^m_u\End^t$ evaluated on various controls $\ddu{s}$ forming the $k$-tuple $\vv{\du}$. Indeed 
\begin{align*}
    &\End^t[u+s\circ\vv{ \du}]\overset{loc}{=}\\
    &\End^t[u]+ \D_{u}\End^t[s\circ\vv{ \du}]+\frac 1{2!} \D^2_{u}\End^t[s\circ\vv{ \du}]+\hdots+\frac 1{k!} \D^k_{u}\End^t[s\circ\vv{ \du}]+o(s^k)=\\
    &\End^t[u]+s\cdot \D_{u}\End^t[\du^{(1)}]+s^2\cdot\left(\frac 1{2!} \D^2_{u}\End^t[\du^{(1)},\du^{(1)}]+\D_{u}\End^t[\du^{(2)}]\right)+\hdots\\
    &\qquad +s^k\cdot\left( \frac 1{k!} \D^k_{u}\End^t[\du^{(1)},\hdots,\du^{(1)}]+\frac 1{(k-1)!} \D^{k-1}_{u}\End^t[\du^{(2)},\du^{(1)},\hdots \du^{(1)}]+\hdots \right.\\
    &\left.\qquad \qquad +\frac 1{2!}\D^2_u\End^t[\du^{(1)},\du^{(k-1)}]+ \D_{u}\End^t[\du^{(k)}] \right)+o(s^k)=
\end{align*}
Leading to
\begin{align*}
    \bb{1}(t,\vv{\du})=&\D_u\End^t[\ddu{1}]\\
    \bb{2}(t,\vv{\du})=&2\D_u\End^t[\ddu{2}]+\D^2_u\End^t[\ddu{1},\ddu{2}]\\
    \hdots\\
    \bb{k}(t,\vv{\du})=&k!\cdot\D_u\End^t[\ddu{k}]+\frac{k!}{2!}\D^2_u\End^t[\ddu{1},\ddu{k-1}]+\hdots\D^k_u\End^t[\ddu{1},\hdots,\ddu{1}]\ .
\end{align*}

\paragraph{Description of curves $\bb{m}(t,\vec{\du})$}
It is possible to generalize our previous results about $k$-variations in the direction of $\du$  to describe curves $\bb{m}(t,\vv{\du})$.

\begin{lem}[the general form of $\bb{m}(t,\vv{\du})$s]
\label{lem:b_k_polynomial}
 Consider a $k$-tuple  $\vv{du}=(\ddu{1},\hdots,\ddu{k})\in \U^k\simeq\T^k_u\U$. Let the $k$-jet of the curve $s\mapsto \End^t[u+s\circ\vv{\du}]$ at $s=0$ be given in local coordinates by 
$$\End^t(u+s\circ\vv{\du})\overset{loc}=q(t)+s\cdot \bb{1}(t,\vv{\du})+s^2\cdot\bb{2}(t,\vv{\du})+\hdots s^k\cdot\bb{k}(t,\vv{\du})
+o(s^k)\ .$$
Then curves $\bb{m}(t,\vv{\du})$
 are subject to the following  system of ODEs:
 \begin{equation}
\bb{m}(t,\vv{\du})=\sum_{r=0}^m\sum_i\du_i^{(r)}(t)\left(\sum_{\alpha,\  w(\alpha)=m-r} \frac 1{\alpha!} \cdot\D^{|\alpha|}_{q(t)}X_i[\vec{b}(t,\du)^\alpha]  \right) \ ,    
 \end{equation} 
 where we use the notation introduced on page \pageref{multi_ind_not} and we denote $\ddu{0}=u$.  
\end{lem}

\begin{proof}
It is enough to repeat the proof of Lemma~\ref{lem:b_k}. Let $q_s(t)$ be a solution of 
$$\dot{q_s}(t)=\sum_i\left(u_i(t)+s\cdot\ddu{1}_i(t)+\hdots +s^k\cdot \ddu{k}_i(t)\right)X_i(q_s(t))\ .$$
Then analogously as before 
$$\pa_s^{m}\Big|_0\left[X_i(q_s(t))\right]=m!\cdot\left(\sum_{\alpha,\  w(\alpha)=m} \frac 1{\alpha!} \cdot\D^{|\alpha|}_{q(t)}X_i[\vec{b}(t,\vv{\du})^\alpha]  \right)\ .$$
Further, note that for $r>m$ we have
$\pa_s^{m}\Big|_0\left[s^r\cdot X_i(q_s(t))\right]=0$, while for $r\leq m$
$$\pa_s^{m}\Big|_0\left[s^r\cdot X_i(q_s(t))\right]=
r!\cdot \binom{m}{r}\cdot \pa^{m-r}\left[X_i(q_s(t))\right]=m!\cdot\left(\sum_{\alpha,\  w(\alpha)=m-r} \frac 1{\alpha!} \cdot\D^{|\alpha|}_{q(t)}X_i[\vec{b}(t,\vv{\du})^\alpha]  \right)$$
Now for $m\leq k$ we have
\begin{align*}
    \dotbb{m}&(t,\vv{\du})= \frac 1{m!}\cdot\frac{\dd}{\dd t}\left(\pa_s^m\Big|_0q_s(t)\right)=\frac 1{m!}\cdot\pa_s^m\Big|_0\left(\dot{q_s}(t)\right)=\\
    &\frac 1{m!}\cdot\pa_s^m\Big|_0\left[\sum_i \left(u_i+s\ddu{1}_i+s^2\ddu{2}_i\hdots +s^k\ddu{k}_i\right)X_i(q_s(t))\right]=\\
    & \frac 1{m!}\cdot\sum_i \bigg\{ u_i(t)\cdot\pa_s^m\Big|_0\left[X_i(q_s(t))\right]+\ddu{1}_i(t)\cdot\pa_s^m\Big|_0\left[s\cdot X_i(q_s(t))\right]+\hdots \ddu{k}_i(t)\cdot\pa_s^m\Big|_0\left[s^k\cdot X_i(q_s(t))\right]\bigg\}=\\ &\sum_iu_i(t)\cdot\left(\sum_{\alpha,\  w(\alpha)=m} \frac 1{\alpha!} \cdot\D^{|\alpha|}_{q(t)}X_i[\vec{b}(t,\vv{\du})^\alpha]  \right)+\\
    &\phantom{XX}\sum_i  \ddu{1}_i(t)\cdot\left(\sum_{\alpha,\  w(\alpha)=m-1} \frac 1{\alpha!} \cdot\D^{|\alpha|}_{q(t)}X_i[\vec{b}(t,\vv{\du})^\alpha]  \right)+\\
    &\phantom{XX}\sum_i \ddu{2}_i(t)\cdot\left(\sum_{\alpha,\  w(\alpha)=m-2} \frac 1{\alpha!} \cdot\D^{|\alpha|}_{q(t)}X_i[\vec{b}(t,\vv{\du})^\alpha]  \right)+\hdots +\\
    &\phantom{XX}+\sum_i \ddu{m}_i(t)\cdot\left(\sum_{\alpha,\  w(\alpha)=0} \frac 1{\alpha!} \cdot\D^{|\alpha|}_{q(t)}X_i[\vec{b}(t,\vv{\du})^\alpha]  \right)=\\
    &=\sum_{r=0}^m\ddu{r}_i(t)\left(\sum_{\alpha,\  w(\alpha)=m-r} \frac  1{\alpha!} \cdot\D^{|\alpha|}_{q(t)}X_i[\vec{b}(t,\vv{\du})^\alpha]  \right).
\end{align*}
This ends the proof.
\end{proof}

\paragraph{Adapted coordinates for polynomial $k$-variations} It is interesting to see how the $k$-variation of $\End^t$ in the direction of $\vv{\du}$ looks in adapted coordinates.

\begin{thm} 
\label{thm:adapted_polynomial}
Consider a $k$-tuple  $\vv{\du}=(\ddu{1},\hdots,\ddu{k})\in \U^k\simeq \T^k_u\U$. Let the $k$-jet of the curve $s\mapsto \End^t[u+s\circ\vv{\du}]$ at $s=0$ be given in local coordinates by 
$$\End^t(u+s\circ\vv{\du})\overset{loc}=q(t)+s\cdot \bb{1}(t,\vv{\du})+s^2\cdot\bb{2}(t,\vv{\du})+\hdots s^k\cdot\bb{k}(t,\vv{\du})
+o(s^k)\ .$$
Let $(\q{1}(t,\vv{\du}),\hdots,\q{k}(t,\vv{\du}))$ be the image of the $k$-jet $(\bb{1}(t,\vv{\du}),\hdots,\bb{k}(t,\vv{\du}))$ under the transformation of adapted coordinates $\AddCoord{k}(t)$. That is,
for any $m=1,2,\hdots k$ we have 
\begin{equation}
    \label{eqn:q_k_polynomial}
    \q{m}(t,\vv{\du}):=\sum_{\alpha,\ w(\alpha)=m} \frac 1{\alpha!} \cdot \lin{|\alpha|}(t) [\vec{b}(t,\vv{\du})^{\alpha}]\ .
    \end{equation}  
    Then, curves $\q{m}(t,\vv{\du})$ satisfy the following  system of ODEs:
\begin{equation}
\label{eqn:dot_q_k_polynomial}
    \dotq{m}(t,\vv{\du})=\sum_{r=1}^m\sum_i\du_i^{(r)}(t)\cdot\left\{\sum_{\alpha,\ \beta,\ w(\alpha)+w(\beta)=m-r} \frac{ 1}{\alpha!} \cdot  \frac 1{\beta !} \cdot  \lin{|\alpha|+1}(t) \left[\D^{|\beta|}_{q(t)}X_i[\vec{b}(t,\vv{\du})^\beta]
,\vec{b}(t,\vv{\du})^{\alpha}\right]\right\}\ .
\end{equation}
\end{thm}

By expressing $\bb{m}(t,\vv{\du})$s in \eqref{eqn:dot_q_k_polynomial} via $\q{m}(t,\vv{\du})$s (i.e. using the inverse of the transformation of adapted coordinates) we get an immediate corollary, generalizing Theorem~\ref{thm:adapted_q}

\begin{thm}
\label{thm:adapted_q_polynomial}
Consider a trajectory $q(t)$ of a control-linear system \eqref{eqn:lin_ctrl_syst} corresponding to the control $u\in \U$ and choose $k\in \mathbb{N}$. Then for every $k$-tuple  $\vv{\du}=(\ddu{1},\hdots,\ddu{k})\in \U^k\simeq \T^k_u\U$ the curve 
$$p^{(k)}(t):=(t, \q{1}(t,\vv{\du}),\hdots,\q{k}(t,\vv{\du}))\ ,$$
where $(\q{1}(t,\vv{\du}),\hdots,\q{k}(t,\vv{\du}))$ is the image under the transformation of adapted coordinates $\AddCoord{k}(t)$  of the $k$-jet of $s\mapsto \End^t[u+s\circ\vv{\du}]$ at $s=0$, is a trajectory  of the control-affine system 
\begin{equation}
\label{eqn:aff_ctr_sys_polynomial}
    \dot p^{(k)}(t)=\pa_t+\sum_{r=1}^k\sum_i \ddu{r}_i(t)\cdot Y_i^{(k-r+1)}(p^{(k)}(t))\qquad\text{with  $p^{(k)}(0)=(0,0,\hdots,0)$.}
\end{equation}
Here, for $m=1,2,\hdots,k$ fields $Y^{(m)}_i$ are defined precisely as in the assertion of Theorem~\ref{thm:adapted_q}.
\end{thm}

Let us now proceed with the proof of the former theorem. 
\begin{proof}[Proof of Theorem~\ref{thm:adapted_polynomial}]
Again we roughly repeat steps from the proof of Theorem~\ref{thm:adapted}. Since $\q{m}(t,\vv{\du})$ is given by \eqref{eqn:q_k_polynomial}, we have
$$\dotq{m}(t,\vv{\du}):=\sum_{\alpha,\ w(\alpha)=m} \frac 1{\alpha!} \cdot \dotlin{|\alpha|}(t) [\vec{b}(t,\vv{\du})^{\alpha}]+\sum_{\alpha,\ w(\alpha)=m} \sum_{l\leq m}\frac 1{\alpha!} \cdot a_l\cdot \lin{|\alpha|}(t) [\dotbb{l},\vec{b}(t,\vv{\du})^{\alpha-1_l}]\ ,$$
which has the form
    $$\dotq{m}(t,\vv{\du})=\sum_i u_i\cdot A_{i} +\sum_{r=1}^m\sum_i \ddu{r}_i\cdot B_{i}^{(r)} \ ,$$
    Like in the proof of Theorem \ref{thm:adapted}
\begin{align*} \sum_i u_i&\cdot A_{i}=\\
    &\sum_{\alpha,\ w(\alpha)=j} \dotlin{|\alpha|} [\vec{b}(t)^{\alpha}]+\sum_{i} u_i\cdot\left\{\sum_{\gamma,\ \beta,\ w(\gamma)+w(\beta)=j} \frac{1}{\gamma!} \cdot  \frac 1{\beta !} \cdot  \lin{|\gamma|+1} [\D^{|\beta|}_{q(t)}X_i[\vec{b}(t)^\beta],\vec{b}(t)^{\gamma}]\right\}=0\ .
\end{align*}
On the other hand
\begin{align*}
     \sum_i\du_i^{(r)}\cdot B_{i}^{(r)}= &\sum_{\alpha,\ w(\alpha)=m}\sum_{l\leq m} \frac{1}{\alpha!} \cdot a_l\cdot \lin{|\alpha|} [\text{part of $\dotbb{l}$ linear in $\du_i^{(r)}$'s},\vec{b}(t)^{\alpha-1_l}]=\\
    &\sum_{\alpha,\ w(\alpha)=m}\sum_{l\leq m} \frac{1}{\alpha!} \cdot a_l\cdot \lin{|\alpha|} \left[\sum_{i} \du_i^{(r)}\cdot\left\{\sum_{\beta,\  w(\beta)=l-r} \frac {\new 1}{\beta !} \cdot\D^{|\beta|}_{q(t)}X_i[\vec{b}(t)^\beta]\right\}
    ,b(t)^{\alpha-1_l}\right]=\\
    &\sum_{i} \ddu{r}_i\cdot\left\{\sum_{\alpha,\ w(\alpha)=m}\sum_{l\leq m} \frac{1}{(\alpha-1_l)!} \cdot \sum_{\beta,\  w(\beta)=l-r} \frac {\new 1}{\beta !} \cdot  \lin{|\alpha-1_l|+1} \left[\D^{|\beta|}_{q(t)}X_i[\vec{b}(t)^\beta]
    ,\vec{b}(t)^{\alpha-1_l}\right]\right\}
\end{align*}

Now observe that a triple $(\alpha,l,\beta)$ where $w(\alpha)=j$, $a_l>0$ and $w(\beta)=l-r$ uniquely determines a pair of multi-indexes $(\gamma=\alpha-1_l,\beta)$ satisfying $w(\gamma)+w(\beta)=(m-l)+l-r=m-r$. Thus we may change the summation order in the expression above to obtain    
$$\sum_i \du_i^{(r)}\cdot B_{i}^{(r)}=\sum_{i} \ddu{r}_i\cdot\left\{\sum_{\gamma,\ \beta,\ w(\gamma)+w(\beta)=m-r} \frac{1}{\gamma!} \cdot  \frac {\new 1}{\beta !} \cdot  \lin{|\gamma|+1} [\D^{|\beta|}_{q(t)}X_i[\vec{b}(t)^\beta]
,\vec{b}(t)^{\gamma}]\right\}\ .$$
This ends the proof. \end{proof}

\paragraph{Example}
Consider 3-variation in the direction of $\vv{\du}=(\ddu{1},\ddu{2},\ddu{3})$. In that situation, it is quite easy to express curves $\q{m}(t,\vv{\du})$ for $m=1,2,3$ in terms of curves $\q{m}(t,\ddu{j})$. It turns out that
\begin{align*}
    \q{1}(t,\vv{\du})&=\q{1}(t,\ddu{1})\\
    \q{2}(t,\vv{\du})&=\q{2}(t,\ddu{1})+\q{1}(t,\ddu{2})\\
    \q{3}(t,\vv{\du})&=\q{3}(t,\ddu{1})+2\q{2}(t,\ddu{1},\ddu{2})+\q{1}(t,\ddu{3})\ .
\end{align*}
Here symbol $\q{2}(t,\ddu{1},\ddu{2})$ denotes the symmetrization 
$$2\q{2}(t,\ddu{1},\ddu{2}):=\q{2}(t,\ddu{1}+\ddu{2})-\q{2}(t,\ddu{1})-\q{2}(t,\ddu{2})\ .$$

\paragraph{A hypothesis related with Goh-conditions}

By the results of 
Theorem \ref{thm:adapted_q_polynomial} system \eqref{eqn:aff_ctr_sys_polynomial} is a control system in $\R\times\underbrace{\R^n\times\hdots\R^n}_k\ni (t,\q{1},\q{2},\hdots,\q{k})$ controlled by a family of vector fields
\begin{align*}
\Z{k}_i=&(Y_i^{(1)}(t),Y_i^{(2)}(t,p^{(1)}),\hdots,Y_i^{(k)}(t,p^{(1)},\hdots,p^{(k-1)})),\\
\Z{k-1}_i=&(0,Y_i^{(1)}(t),\hdots,Y_i^{(k-1)}(t,p^{(1)},\hdots,p^{(k-2)})),\\
 &\hdots \\
 \Z{2}_i=&(0,\hdots,0,Y_i^{(1)}(t),Y^{(2)}_i(t,p^{(1)})),\\
 \Z{1}_i=&(0,\hdots,0,Y_i^{(1)}(t))
\end{align*}
for $m=1,2,\hdots,k$. Based on the proof of Lemma~\ref{lem:results_deg_2} we state the following 

\begin{hyp}\label{hyp}
Within the setting of Subsection~\ref{ssec:sr}, consider a sub-Riemannian abnormal extremal corresponding to a control $u\in L^2([0,T],\R^l)$. If the assumptions of \cite[Thm 1.1]{Boarotto_Monti_Socionovo_2022} are satisfied then system \eqref{eqn:aff_ctr_sys_polynomial} is not-controllable at zero. 
\end{hyp}

If the above hyphothesis would be true, we could get higher-order Goh conditions from Sussmann's-Jurdvevic's non-controllability conditions in essentially the same way as was done in the proof of Lemma~\ref{lem:results_deg_2}. (Perhaps some other conditions could also be computable following the lines of \cite{Agrachev_2023}.) To back this up, after quite heavy calculations, it is possible to check that  for all possible indices $i,j,s\in\{1,2,\hdots,l\}$ we have
\begin{align*} 
    \Z{1}_i=&(0,0,\lin{1}(t)[X_i]),\\
    [\Z{2}_i,\Z{2}_j]=&(0,0,\lin{1}(t)[X_i,X_j])\quad\text{and}\\
    [\Z{3}_i,[\Z{3}_j,\Z{3}_s]]=&(0,0,\lin{1}(t)\left[[X_i,[X_j,X_s]]\right])+\\
    & (0,0,\lin{1}(t)\left[A(t)\lin{2}(t)[X_i,X_j-X_s\right])+\\
    &(0,0,\lin{1}(t)\left[\D X_s[A(t)\lin{2}(t)[X_i,X_j]]-\D X_j[A(t)\lin{2}(t)[X_i,X_s]]\right]),
\end{align*}
where $A(t)$ denotes the inverse of the map $\lin{1}(t)$. Hence the existence of the covector $(0,0,\psi_0)\in (\R^n\times\R^n\times\R^n)^\ast$ annihilating all these vector fields will imply the existence of a Pontryagin covector (cf. Lemmta~\ref{lem:results_deg_1} and \ref{lem:results_deg_2}) $\phi(t):=\lin{1}(t)^\ast\psi_0$ such that $\<\phi(t),X_i>$ and $\<\phi(t),[X_i,X_j]>$ vanish. To get conditions $\<\phi(t),[X_i,[X_j,X_k]]>=0$, we should additionally assume that $\psi_0$ annihilates the remaining two terms in the triple Lie bracket. We believe that this condition will follow from the assumptions of \cite[Thm 1.1]{Boarotto_Monti_Socionovo_2022}, i.e. the extremal being of corank 1 and vanishing of the second (intrinsic) derivative of the end-point map at $u$. We shall study this topic in more detail in a future publication.






\bibliographystyle{amsalpha}
\bibliography{bibl}

\end{document}